\def \Sum{\displaystyle\sum}
\newcommand{\norm}[1]{\left\| #1 \right\|}
\newcommand{\supp}{\operatorname{supp}}
\newcommand{\R}{{\mathbb R}}
\newcommand{\C}{{\mathbb C}}
\newcommand{\ip}[2]{\left\langle #1,#2\right\rangle}
\theoremstyle{plain}
\newtheorem{theorem}{Theorem}[section]
\newtheorem{corollary}[theorem]{Corollary}
\newtheorem{lemma}[theorem]{Lemma}
\newtheorem{proposition}[theorem]{Proposition}
\numberwithin{equation}{section}
\numberwithin{figure}{section}
\theoremstyle{definition}
\newtheorem{definition}[theorem]{Definition}
\newtheorem{example}[theorem]{Example}
\newtheorem{remark}[theorem]{Remark}
\begin{document}
\title{Invertibility of graph translation and support of Laplacian Fiedler vectors}

\author{Matthew Begu\'e}

\address{Matthew Begu\'e\\
Department of Mathematics\\
University of Maryland\\
College Park, MD, 20742 USA}

\email{begue@math.umd.edu}

\author{Kasso A.~Okoudjou} 

\address{Kasso A.~Okoudjou\\
Department of Mathematics\\
University of Maryland\\
College Park, MD, 20742 USA}
 \thanks{ K.A.O was partially supported by a grant from the Simons Foundation ($\# 319197$), and ARO grant W911NF1610008.}

\email{kasso@math.umd.edu}

\subjclass[2000]{Primary 94A12, 42C15; Secondary  65F35, 90C22}

\keywords{Signal processing on graphs, time-vertex analysis, Generalized translation and modulation, Spectral graph theory, Fiedler vectors}

\date{\today}
\maketitle \pagestyle{myheadings} \thispagestyle{plain}
\markboth{M.BEGU\'E AND K.A.OKOUDJOU}{INVERTIBILITY OF GRAPH TRANSLATION AND SUPPORT OF FIEDLER VECTORS}

\begin{abstract}
The graph Laplacian operator is widely studied in spectral graph theory largely due to its importance in modern data analysis.  Recently, the Fourier transform and other time-frequency operators have been defined on graphs using Laplacian eigenvalues and eigenvectors.  We extend these results and prove that the translation operator to the $i$'th node is invertible if and only if all eigenvectors are nonzero on the $i$'th node.  Because of this dependency on the support of eigenvectors we study the characteristic set of Laplacian eigenvectors.  We prove that the Fiedler vector of a planar graph cannot vanish on large neighborhoods and then explicitly construct a family of non-planar graphs that do exhibit this property.
\end{abstract}

\section{Introduction}\label{sec1}
\subsection{Preliminaries}\label{subsec1.1}
Techniques and methods from spectral graph theory and applied and computational harmonic analysis are increasingly being used to analyze, process and make predictions on the huge data sets being generated by the technological advances of the last few decades, e.g., see \cite{pagerank, cameron2014, Kempe2003, Felzenszwalb2004}. At the same time these tasks on large data sets and networks require new mathematical technologies which are leading to  the golden age of Mathematical Engineering  \cite{belkniyog2003, coifmanlafon2006}.

As a result, theories like the vertex-frequency analysis have emerged in an effort to investigate data from both a computational harmonic analysis and spectral graph theoretical point of views \cite{ShumanIEEE}. In particular,  analogues of fundamental concepts and tools such as time-frequency analysis \cite{vertexfrequency}, wavelets \cite{graphwavelets}, sampling theory \cite{AnisOrtega2014}, have been developed in the graph context.  Much of this is done via spectral properties of the graph Laplacian, more specifically through the choice of some eigenbases of the graph Laplacian.  However, and to the best of our knowledge,  a qualitative analysis of the effect of this choice on the resulting theory has not been undertaken. In this paper we consider such qualitative analysis, focusing on the effect of the choice of eigenbasis for the graph Laplacian, on the graph translation operator defined in \cite{vertexfrequency}.

Throughout this paper we shall consider  finite unweighted and undirected graphs. To be specific, a  graph is defined by the pair $(V,E)$ where $V$ denotes the set of vertices and $E$ denotes the set of edges.  When the vertex and edge set $(V,E)$ are clear, we will simply denote the graph by $G$.  We assume that the cardinality of $V$ is $N$. 
 Each element in the edge set $E$ is denoted as an ordered pair $(x,y)$ where $x,y\in V$.  If $(x,y)\in E$, we will often write $x\sim y$ to indicate that vertex $x$ is connected to $y$.  In such case, we say that $y$ is a neighbor of $x$.
 
A graph is \emph{undirected} if the edge set is symmetric, i.e., $(x,y)\in E$ if and only if $(y,x)\in E$.  In the sequel, we only consider  undirected graphs.  The graph is \emph{simple} if there are no self-loops, that is, the edge set contains no edges of the form $(x,x)$.  Additionally, we assume that graphs have at most one edge between any two pair of vertices, i.e., we do not allow multiple-edges between vertices.

The \emph{degree} of vertex $x\in V$ in an undirected graph equals the number of edges emminating from (equivalently, to) $x$ and is denoted $d_x$.  A graph is called \emph{regular} if every vertex has the same degree; it is called $k$-regular when that degree equals $k\in \mathbb{N}$. We refer to  \cite{chungbook} for more background on graphs.

A \emph{path} (of length $m$), denoted $p$, is defined to be a sequence of adjacent edges, $p=\{(p_{j-1},p_j)\}_{j=1}^m$.  We say that the path $p$ connects $p_0$ to $p_m$.  A path is said to be simple if no edge is repeated in it.  A graph is \emph{connected} if for any two distinct vertices $x,y\in V$, there exists some path connecting $x$ and $y$.  

We make the following definition of a ball on a graph that is motivated by the definition of a closed ball on a metric space.
\begin{definition}\label{def:graphball}
Given any $x\in V$ and any integer $r\geq 1$, we define the \emph{ball of radius $r$ centered at $x$}, 
$$B_r(x)=\{y\in V: d(x,y)\leq r\},$$
where $d(x,y)$ is the shorted path length from $x$ to $y$ in $G$.
\end{definition}

\subsection{The graph Laplacian}\label{subsec1.2}

We will consider functions on graphs that take on real (or complex) values on the vertices of the graph.  
Since $V=\{x_i\}_{i=1}^N$ is finite, it is often useful, especially when doing numerical computations, to represent $f:V\to\R$ (or $\C$) as a vector of length $N$ whose $i$'th component equals $f(x_i)$.

Given a finite  graph $G(V,E)$ the adjacency, the \emph{adjacency matrix} is the $N\times N$ matrix, $A$, defined by 
$$A(i,j)=\left\{ \begin{array}{ll} 1, & \text{if }x_i\sim x_j\\  0,&\text{otherwise.}\end{array}\right.$$

The \emph{degree matrix} is the $N\times N$ diagonal matrix $D$ whose entries equal the degrees $d_{x_i}$, i.e., 
$$D(i,j)=\left\{ \begin{array}{ll} d_{x_i}, & \text{if }i=j\\  0,&\text{otherwise.}\end{array}\right.$$

The main differential operator that we shall study is $L$, the Laplacian (Laplace's operator, or graph Laplacian).  
The pointwise formulation of the  Laplacian applied to a function $f:V\to\C$ is given by 
\begin{equation}\label{pointwiseL}
L f(x)=\Sum_{y\sim x} f(x)-f(y).\end{equation}

The graph Laplacian, $L$, can be conveniently represented as a matrix, which, by an abuse of notations, we shall also denote by $L$.    It follows from~\eqref{pointwiseL} that the $(i, j)^{th}$ entry of $L$ is given by
\begin{equation}\label{LaplacianMatrix1}
L(i,j)= \left\{ \begin{array}{l l}  d_{x_i} & \text{if } i=j \\ -1 & \text{if } x_i\sim x_j\\ 0 & \text{otherwise,}\end{array}\right.\end{equation}
or, equivalently, $L=D-A$.  Matrix $L$ is called the unnormalized Laplacian to distinguish it from the normalized Laplacian, $\mathcal{L}=D^{-1/2} L D^{-1/2}= I-D^{-1/2}AD^{-1/2}$, used in some of the literature on graphs, e.g., \cite{chungbook}.  However, we shall work exclusively with the unnormalized Laplacian and shall henceforth just refer to it as the Laplacian.

Furthermore, it is not difficult to see that $$\ip{Lf}{f}=\sum_{x \sim y}|f(x)-f(y)|^2$$ for any $f\in \C^N$.  Consequently, $L$ is a positive semidefinite matrix whose eigenvalues,  $\{\lambda_k\}_{k=0}^{N-1}\subset [0, \infty).$  In addition,  if the graph $G(V, E)$ is  connected the spectrum of the Laplacian $L$  is given by $$0=\lambda_0<\lambda_1\leq \cdots \leq \lambda_{N-1}.$$  
Throughout, we shall denote by $\Phi$ the  set of orthonormal eigenvectors $\{\varphi_k\}_{k=0}^{N-1}$. We abuse notations, and view  $\Phi$ as a  $N\times N$ orthogonal matrix whose $(k-1)$th column is the vector $\varphi_k$.  Note that $\Phi$ is not unique, but for the theory that follows, we assume that one has fixed an eigenbasis and hence the matrix $\Phi$ is assumed to be fixed.

In fact, the following result completely characterizes the relationship between eigenvalues of a graph and connectedness properties of the graph.

\begin{theorem}[\cite{chungbook}]\label{th:constvector}
If the graph $G$ is connected then $\lambda_0=0$ and $\lambda_i>0$ for all $1\leq k \leq N-1$.  In this case $\varphi_0\equiv1/\sqrt{N}$.  More generally, if the graph $G$ has $m$ connected components, then $\lambda_0=\lambda_1=\cdots=\lambda_{m-1}=0$ and $\lambda_k>0$ for all $k=m,...,N-1$.  The indicator function on each connected component (properly renormalized), forms an orthonormal eigenbasis for the $m$-dimensional eigenspace associated to eigenvalue 0.
\end{theorem}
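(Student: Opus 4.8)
The plan is to analyze the kernel of $L$ directly through the quadratic form recorded above, deduce the multiplicity and an eigenbasis of the zero eigenvalue, and then read off the statements about the ordered spectrum.

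First I would use the identity $\ip{Lf}{f}=\sum_{x\sim y}|f(x)-f(y)|^2$. Being a sum of nonnegative terms, this shows $L$ is positive semidefinite, so every eigenvalue is nonnegative and in particular the smallest, $\lambda_0$, satisfies $\lambda_0\geq 0$. Moreover, since $L$ is self-adjoint and positive semidefinite, $\ip{Lf}{f}=\norm{L^{1/2}f}^2$, so $f\in\ker L$ if and only if $\ip{Lf}{f}=0$, which (by the identity) holds if and only if $f(x)=f(y)$ for every edge $x\sim y$.

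Next I would translate this edge condition into a statement about connected components. If $x,y$ lie in the same component, there is a path $x=p_0,p_1,\dots,p_\ell=y$, and applying $f(p_{j-1})=f(p_j)$ along the path yields $f(x)=f(y)$; hence every $f\in\ker L$ is constant on each connected component. Conversely, if $f$ is constant on each component then $f(x)=f(y)$ on every edge, since no edge joins two distinct components, so $f\in\ker L$. Therefore $\ker L$ is exactly the span of the indicator functions $\ind{C_1},\dots,\ind{C_m}$ of the $m$ connected components. These vectors have pairwise disjoint supports, so they are linearly independent and $\dim\ker L=m$; normalizing each $\ind{C_j}$ to unit length produces an orthonormal eigenbasis of the eigenspace for eigenvalue $0$, which is the last assertion of the theorem.

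Finally I would record the consequences for the ordered spectrum. Since $0$ is an eigenvalue of multiplicity exactly $m$ and all eigenvalues are nonnegative, the increasing enumeration begins $0=\lambda_0=\lambda_1=\cdots=\lambda_{m-1}$ with $\lambda_k>0$ for $k=m,\dots,N-1$. In the connected case $m=1$, so $\ker L$ is one-dimensional, spanned by the all-ones vector; its unit normalization is $\varphi_0\equiv 1/\sqrt N$, and $\lambda_i>0$ for $1\le i\le N-1$. The only points requiring any care are the equivalence $\ip{Lf}{f}=0\iff f\in\ker L$ for a positive semidefinite self-adjoint matrix (handled by $\ip{Lf}{f}=\norm{L^{1/2}f}^2$, or by diagonalization) and the observation that edges respect the partition into components; neither is a genuine obstacle, so I expect the proof to be short.
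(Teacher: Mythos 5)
Your proof is correct and is the standard argument: the paper states this result without proof (citing \cite{chungbook}), and your route via the quadratic form $\ip{Lf}{f}=\sum_{x\sim y}|f(x)-f(y)|^2$, identifying $\ker L$ with functions constant on components and hence spanned by the (pairwise disjointly supported, hence orthogonal) component indicators, is exactly the classical one. All steps, including the equivalence $\ip{Lf}{f}=0\iff Lf=0$ for a positive semidefinite self-adjoint operator, are handled properly.
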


As seen from Theorem~\ref{th:constvector}, the first nonzero eigenvalue of $L$ is directly related to whether or not the graph is connected. In fact, $\lambda_1$ is known as the \emph{algebraic connectivity} of the graph, see \cite{Fiedler1973}, and is widely studied.   Its corresponding eigenvector, $\varphi_1$, is known as the \emph{Fiedler vector} \cite{Fiedler1973, Fiedler1975} and will be discussed more in-depth in Section \ref{ch:eigenvector}. If $\lambda_1$ has multiplicity $1$, then the corresponding Fiedler vector is unique up to a sign.  The Fiedler vector is used extensively in dimensionality reduction techniques \cite{belkniyog2003, coifmanlafon2006, CzajaSchroedinger}, data clustering \cite{newman2003}, image segmentation \cite{felz2004}, and graph drawing \cite{Spielmannotes}.
Finally, we observe that the highest $\lambda_1$ can be is $N$, which happens only for the complete graph in which case the spectrum is $\{0,N,...,N\}$.

\subsection{Outline of the paper}\label{subsec1.3} 
The rest of the paper is organized as follows. In Section \ref{sec:fourier} we review the theory of vertex-frequency analysis on graphs introduced in \cite{vertexfrequency}.  We primarily focus on the graph translation operator since it has substantial differences to the classical Euclidean analogue of translation.  In general, the translation operator is not invertible in the graph setting. We prove when the graph translation operator acts as a semigroup and completely characterize conditions in which the operator is invertible and derive its inverse.

In Section \ref{ch:eigenvector} we investigate  characteristic sets (sets of zeros) of eigenvectors of the Laplacian because it is directly related to the theory of translation developed in Section \ref{sec:fourier}.  In particular, we focus on the support of the Fiedler vector of the graph.  We prove in Section \ref{subsec3.1} that planar graphs cannot have large neighborhoods of vertices on which the Fiedler vector vanishes.  We then introduce a family of (non-planar) graphs, called barren graphs, that have arbitrarily large neighborhoods on which the Fiedler vector does vanish in Section \ref{sec:barren}.  In Section \ref{sec:graphadding} we prove results about the algebraic connectivity and Fiedler vector of a graph formed by adding multiple graphs.

\section{Translation operator on graphs}\label{sec:fourier}
The notions of graph Fourier transform, vertex-frequency analysis, convolution, translation, and modulation operators were recently introduced in \cite{vertexfrequency}.   In this section, we focus on the translation operator and investigate certain of its properties including, semi-group (Theorem~\ref{th:semigroup}),   invertibility and isometry (Theorem~\ref{th:isometry}).

Analogously to the classical Fourier transform on the real line which expressed a function $f$ in terms of the eigenfunctions of the Laplace operator, we define the \emph{graph Fourier transform}, $\hat f$, of a functions $f:V\to\C$ as the expansion of $f$ in terms of the eigenfunctions of the graph Laplacian.  

\begin{definition}
Given the graph, $G$, and its Laplacian, $L$, with spectrum $\sigma(L)=\{\lambda_k\}_{k=0}^{N-1}$ and eigenvectors $\{\varphi_k\}_{k=0}^{N-1}$, the \emph{graph Fourier transform} of $f:V\to\C$ is by 

\begin{equation}\label{graphFT}
\hat f(\lambda_k) = \langle f,\varphi_k\rangle = \Sum_{n=1}^N f(n) \varphi_k^*(n).
\end{equation}
\end{definition}
Notice that the graph Fourier transform is only defined on values of $\sigma(L)$. In particular,  one should interpret the notation $\hat f(\lambda_k)$ to designate the inner product of $f$ with the $k$'th eigenfunction of $L$.  However to emphasize the interplay between the vertex and spectral domains, we shall abuse the notation as defined here.

The \emph{graph inverse Fourier transform} is then given by

\begin{equation}\label{eq:invgraphFT}
f(n)=\Sum_{k=0}^{N-1} \hat f(\lambda_k)\varphi_k(n).
\end{equation}

It immediately follows from the above definition that Parseval's equality holds in this setting as well. Indeed, 
for any $f,g:V\to  \C$, then $\langle f,g\rangle=\langle\hat f,\hat g\rangle$.  Consequently, 
$$\norm{f}_{\ell^2}^2=\Sum_{n=1}^N |f(n)|^2=\Sum_{l=0}^{N-1}|\hat f(\lambda_\ell)|^2=\norm{\hat f}_{\ell^2}^2.$$

Recall that the convolution of two signals $f,g\in L^2(\mathbb R)$ can be defined via the Fourier transform as $\widehat{(f\ast g)}(\xi)=\hat f(\xi)\hat g(\xi)$.  Using this approach, and  by taking the inverse graph Fourier transform, \eqref{eq:invgraphFT}, we can define convolution in the graph domain.  For $f,g:V\to\C$, we define the \emph{graph convolution} of $f$ and $g$ as
\begin{equation}\label{graphconv}
f\ast g(n) = \Sum_{l=0}^{N-1} \hat f(\lambda_\ell) \hat g(\lambda_\ell) \varphi_\ell(n).
\end{equation}

Many of the classical time-frequency properties of the convolution including commutativity, distributivity, and associativity hold for the graph convolution, see {\cite[Proposition 1]{vertexfrequency}}, and all follow directly from the definition of graph convolution \eqref{graphconv}.

For any $k=0,1,..., N-1$ the \emph{graph modulation operator} $M_k:\mathbb C^N\to \mathbb C^N$ is defined as
\begin{equation}\label{eq:genmod}
(M_k f)(n)=\sqrt N f(n) \varphi_k (n).
\end{equation}
Notice that since $\varphi_0 \equiv \frac{1}{\sqrt N}$ then $M_0$ is the identity operator.

An important remark is that in the classical case, modulation in the time domain represents translation in the frequency domain, i.e., $\widehat{M_\xi f}(\omega)=\hat f(\omega-\xi)$.  The graph modulation does not exhibit this property due to the discrete nature of the  spectral domain.  However, it is worthy to notice the special case if $\hat g(\lambda_\ell)=\delta_0(\lambda_\ell)$, i.e., $g$ is a constant function,  then 
\begin{equation*}
\widehat{M_k g}(\lambda_\ell)=\Sum_{n=1}^N \varphi_\ell^*(n)(M_k g)(n) =\Sum_{n=1}^N \varphi_\ell^*(n) \sqrt N \varphi_k(n) \frac{1}{\sqrt N} = \delta_\ell(k).
\end{equation*}
Consequently, if $g$ is the constant unit function, $M_kg=\varphi_k$.

Formally, the translation of a function defined on $\C$ is given by 
$$(T_u f)(t)= f(t-u)= (f\ast \delta_u)(t) .$$
Motivated by this example, for any $f:V\to\mathbb C$ we can define the \emph{graph translation operator}, $T_i: \mathbb C^N\to\mathbb C^N$ via the graph convolution of the Dirac delta centered at the $i$'th vertex:

\begin{equation}\label{eq:graphtrans}
(T_i f)(n) = \sqrt{N} (f\ast \delta_i)(n) = \sqrt{N} \Sum_{k=0}^{N-1} \hat f(\lambda_k)\varphi_k^*(i)\varphi_k(n).
\end{equation}

We can express $T_i f$ in matrix notation as follows:
\begin{equation}T_if=\sqrt N \begin{pmatrix}  
\varphi_0^*(i)\varphi_0(1) & \varphi_1^*(i)\varphi_1(1) & \cdots & \varphi_{N-1}^*(i)\varphi_{N-1}(1)\\
\vdots & \vdots& \cdots & \vdots&\\
\varphi_0^*(i)\varphi_0(N) & \varphi_1^*(i)\varphi_1(N) & \cdots & \varphi_{N-1}^*(i)\varphi_{N-1}(N)
\end{pmatrix}
\begin{pmatrix} \hat f(\lambda_0)\\ \vdots \\ \hat f(\lambda_{N-1})\end{pmatrix}.\label{transmatrix}
\end{equation}

Graph translation exhibits commutative properties, i.e., $T_iT_j f= T_jT_i f$, and distributive properties under the convolution, i.e., $T_i(f\ast g) = (T_i f)\ast g = f\ast (T_i g)$, see \cite[Corollary 1]{vertexfrequency}.  Also, using the definitions of graph convolution, it is elementary to show that for any $i,n\in\{1,...,N\}$ and for any function $g:V\to\mathbb C$ we have
$$T_i g(n)=\overline{T_n\bar g(i)}.$$

Observe that if we choose real-valued eigenfunction in the definition of the graph Fourier transform, then we simply have  
$$T_ig(n)=T_ng(i).$$  These results can be further generalized in the following theorem.

\begin{theorem}\label{th:semigroup}Assume $G$ is a graph whose Laplacian has real-valued eigenvectors $\{\varphi_k\}_{k=0}^{N-1}$.  
Let $\alpha$ be a multiindex, i.e. $\alpha=(\alpha_1,\alpha_2,...,\alpha_K)$ where $\alpha_j\in\{1,...,N\}$ for $1\leq j\leq K$ and let $\alpha_0\in\{1,...,N\}$.  We let $T_\alpha$ denote the composition $T_{\alpha_K}\circ\cdots T_{\alpha_2}\circ  T_{\alpha_1}$.  Then for any $f:V\to\mathbb R$, we have $T_\alpha f(\alpha_0)=T_\beta f(\beta_0)$ where $\beta=(\beta_1,...,\beta_K)$ and $(\beta_0,\beta_1,\beta_2,...,\beta_K)$ is any permutation of $(\alpha_0,\alpha_1,...,\alpha_K)$.
\end{theorem}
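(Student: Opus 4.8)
The plan is to compute a single closed-form expression for $T_\alpha f(\alpha_0)$ that is manifestly symmetric in the tuple $(\alpha_0,\alpha_1,\dots,\alpha_K)$; once such a formula is established, the theorem is immediate because every permutation of the indices leaves that expression unchanged.

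The first step is to record how a single translation acts in the spectral domain. Starting from \eqref{eq:graphtrans}, using that the eigenvectors are real-valued (so $\varphi_k^*=\varphi_k$) and that they are orthonormal, $\sum_{n}\varphi_k(n)\varphi_j(n)=\delta_{kj}$, a direct computation of $\langle T_i f,\varphi_j\rangle$ collapses the double sum to a single term and yields, for all $i$ and $j$,
\begin{equation*}
\widehat{T_i f}(\lambda_j)=\sqrt{N}\,\varphi_j(i)\,\hat f(\lambda_j);
\end{equation*}
that is, $T_i$ multiplies the $j$th Fourier coefficient by the scalar $\sqrt{N}\,\varphi_j(i)$. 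Iterating this identity $K$ times (a routine induction on the number of factors in $T_{\alpha_K}\circ\cdots\circ T_{\alpha_1}$) gives
\begin{equation*}
\widehat{T_\alpha f}(\lambda_k)=N^{K/2}\,\hat f(\lambda_k)\prod_{j=1}^{K}\varphi_k(\alpha_j),
\end{equation*}
and hence, by the inversion formula \eqref{eq:invgraphFT} evaluated at the vertex $\alpha_0$,
\begin{equation*}
T_\alpha f(\alpha_0)=\sum_{k=0}^{N-1}\widehat{T_\alpha f}(\lambda_k)\,\varphi_k(\alpha_0)
=N^{K/2}\sum_{k=0}^{N-1}\hat f(\lambda_k)\prod_{j=0}^{K}\varphi_k(\alpha_j).
\end{equation*}
The right-hand side depends on $\alpha_0,\dots,\alpha_K$ only through the symmetric product $\prod_{j=0}^{K}\varphi_k(\alpha_j)$, so it is invariant under any permutation of those indices; this is exactly the claim $T_\alpha f(\alpha_0)=T_\beta f(\beta_0)$.

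There is no serious obstacle here: the argument is essentially bookkeeping. The only point demanding care is the base identity for $\widehat{T_i f}$, where one must use both that the $\varphi_k$ are real (to dispose of the conjugation in \eqref{graphFT}) and that they are orthonormal (to reduce the iterated sums). Note that the relations already observed before the statement — $T_i g(n)=\overline{T_n\bar g(i)}$ and, in the real case, $T_i g(n)=T_n g(i)$ — are precisely the $K=1$ instance of this computation, so the theorem is the natural inductive generalization of those remarks.
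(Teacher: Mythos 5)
Your proposal is correct, and it takes a genuinely different route from the paper's. The paper argues combinatorially: it sets up an equivalence relation on $(K+1)$-tuples, checks that each adjacent transposition $\sigma_i$ preserves the value $T_{a_K}\circ\cdots\circ T_{a_1}f(a_0)$ — the swap of $a_0$ with $a_1$ via the identity $T_ig(n)=T_ng(i)$, the swaps among $a_1,\dots,a_K$ via commutativity of the translations — and then invokes the fact that adjacent transpositions generate the full symmetric group. You instead derive the single closed formula
\begin{equation*}
T_\alpha f(\alpha_0)=N^{K/2}\sum_{k=0}^{N-1}\hat f(\lambda_k)\prod_{j=0}^{K}\varphi_k(\alpha_j),
\end{equation*}
which is manifestly symmetric in all $K+1$ indices, so the permutation invariance is read off at a glance. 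Your computation is right: since each $T_i$ is the Fourier multiplier $\widehat{T_if}(\lambda_j)=\sqrt N\,\varphi_j(i)\hat f(\lambda_j)$ (using realness of the $\varphi_k$ and orthonormality), iteration and one application of the inversion formula give the display above. What your approach buys is transparency and economy — it subsumes both ingredients of the paper's proof (commutativity and the $K=1$ symmetry $T_ig(n)=T_ng(i)$) as special cases of one identity, and it avoids the group-generation step entirely; it also makes explicit \emph{why} the symmetry holds, namely that composed translations are products of multiplier symbols $\varphi_k(\alpha_j)$ that commute as scalars. What the paper's argument buys is that it leans only on properties already stated (commutativity, the evaluation symmetry) without recomputing anything in the spectral domain. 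Either proof is acceptable; yours is arguably the cleaner one.
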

\begin{proof}
There exists a bijection between the collection of all possible $T_\alpha f(\alpha_0)$ for $|\alpha|=K$, $1\leq \alpha_0\leq N$, and the space of $(K+1)$-tuples with values in $\{1,...,N\}$.  That is, the map that sends $T_\alpha f(\alpha_0)$ to $(\alpha_0,\alpha_1,....,\alpha_K)$ is a bijection.  This enables us to define an equivalence relation on the space $\{1,...,N\}^{K+1}$.  We write $(a_0,...,a_{K})\cong (b_0,...,b_{K})$ if and only if $T_{a_K}\circ\cdots\circ T_{a_1}f(a_0)=T_{b_K}\circ\cdots\circ T_{b_1}f(b_0)$.

By the commutativity of the graph translation operators, $(a_0,a_1...,a_{K})\cong\sigma_1(a_0,a_1...,a_{K})=(a_1,a_0...,a_{K})$, i.e., $\sigma_1$ is the permutation $(1,2)$.  In general, we write $\sigma_i$ to denote the permutation $(i,i+1)$.  Similarly,  $(a_0,a_1...,a_{K})\cong\sigma_i(a_0,a_1...,a_{K})$ for any $i=2,3,...,K-1$.  

We now have that any permutation $\sigma_i$ for $i=1,...,K-1$ preserves equivalency.  This collection of $K-1$ transpositions allow for any permutation, which completes the proof.
\end{proof}

The graph translation operators are distributive with the convolution and the operators commute among themselves.  However, the niceties end here; other properties of translation on the real line do not carry over to the graph setting.  For example, we do not have the collection of graph translation operators forming a group, i.e., $T_i T_j \neq T_{i+j}$.  In fact, we cannot even assert that the translation operators form a semigroup, i.e. $T_iT_j=T_{i\bullet j}$ for some semigroup operator $\bullet:\{1,...,N\}\times\{1,...,N\}\to\{1,...,N\}$.  The following theorem characterizes graphs which do exhibit a semigroup structure of the translation operators.

\begin{theorem}\label{th:Hadamard}
Consider the graph, $G(V,E)$, with real-valued (resp. complex-valued) eigenvector matrix $\Phi=[\varphi_0 \cdots\varphi_{N-1}]$.  Graph translation on $G$ is a semigroup, i.e. $T_iT_j=T_{i\bullet j}$ for some semigroup operator $\bullet:\{1,...,N\}\times\{1,...,N\}\to\{1,...,N\}$, only if $\Phi=(1/\sqrt{N})H$, where $H$ is a real-valued (resp. complex-valued) Hadamard matrix.
\end{theorem}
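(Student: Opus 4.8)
The plan is to pass to the spectral (Fourier) domain, where each translation operator becomes \emph{diagonal}, rewrite the semigroup hypothesis as a functional equation for the entries of $\Phi$, and then use finiteness of $V$ to force every entry to have modulus $1/\sqrt N$.

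First I would compute $T_i$ in the Fourier domain. Applying the graph Fourier transform to \eqref{eq:graphtrans} and using orthonormality of the $\varphi_k$ gives $\widehat{T_i f}(\lambda_k)=\sqrt N\,\varphi_k^*(i)\,\hat f(\lambda_k)$; that is, in the eigenbasis $T_i$ is the diagonal matrix $\Phi^*T_i\Phi=\operatorname{diag}\big(\sqrt N\,\varphi_k^*(i)\big)_{k=0}^{N-1}$. Since $A\mapsto\Phi^*A\Phi$ is an algebra automorphism of $\C^{N\times N}$ and a product of diagonal matrices is diagonal, the identity $T_iT_j=T_{i\bullet j}$ is equivalent to the coordinatewise identities $N\,\varphi_k^*(i)\varphi_k^*(j)=\sqrt N\,\varphi_k^*(i\bullet j)$ for all $k$. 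Setting $\psi_k(v):=\sqrt N\,\varphi_k(v)$ and conjugating, the hypothesis becomes
\begin{equation}\label{eq:multiplicative}
\psi_k(i\bullet j)=\psi_k(i)\,\psi_k(j)\qquad\text{for all }i,j\in\{1,\dots,N\}\text{ and all }k,
\end{equation}
i.e.\ for every frequency $k$ the function $v\mapsto\psi_k(v)$ turns $\bullet$ into ordinary multiplication.

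Next I would use that $V$ is finite. Fix a vertex $i$ and iterate the map $v\mapsto v\bullet i$: set $i^{(1)}=i$, $i^{(m+1)}=i^{(m)}\bullet i$. By \eqref{eq:multiplicative}, $\psi_k(i^{(m)})=\psi_k(i)^m$ for every $m\geq1$ and every $k$. As there are only $N$ vertices, the sequence $(i^{(m)})_m$ repeats, say $i^{(m+p)}=i^{(m)}$ for some $m\geq1$, $p\geq1$; hence $\psi_k(i)^m\big(\psi_k(i)^p-1\big)=0$ for every $k$, so each $\psi_k(i)$ is either $0$ or a root of unity. Consequently $|\varphi_k(i)|\in\{0,\,1/\sqrt N\}$ for every $i$ and $k$.

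Finally I would eliminate the zero case by a counting argument. Because $\Phi$ is orthogonal, each row is a unit vector: $\sum_{k=0}^{N-1}|\varphi_k(i)|^2=1$. A row has exactly $N$ entries, each of squared modulus $0$ or $1/N$, so they can sum to $1$ only if every one of them equals $1/N$; no entry is $0$. Thus $|\varphi_k(i)|=1/\sqrt N$ for all $i,k$, so $\Phi=(1/\sqrt N)H$ with all $|H(i,k)|=1$, and $HH^*=N\,\Phi\Phi^*=N I$, which is exactly the assertion that $H$ is a (complex) Hadamard matrix; if the $\varphi_k$ are real-valued then the entries of $H$ lie in $\{-1,+1\}$ and $H$ is an ordinary real Hadamard matrix. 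The only genuinely delicate step is the passage from ``the operators obey a semigroup law'' to the pointwise equation \eqref{eq:multiplicative} — that is, recognizing that all the $T_i$ are simultaneously diagonalized by $\Phi$ and that equality of operators is therefore equality of spectral symbols; once that is in place, the root-of-unity and norm-counting steps are immediate.
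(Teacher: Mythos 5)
Your proof is correct, and while the first half coincides with the paper's, the second half takes a genuinely different route. Both arguments begin by reducing the operator identity $T_iT_j=T_{i\bullet j}$ to the pointwise multiplicative relation $\sqrt{N}\,\varphi_k(i)\varphi_k(j)=\varphi_k(i\bullet j)$ for every $k$ (the paper does this by comparing the two expansions of $T_iT_jf(n)$ and $T_{i\bullet j}f(n)$ for arbitrary $f$; your simultaneous-diagonalization phrasing is the same fact). From there the paper argues by contradiction on the \emph{size} of an entry: if some $|\varphi_k(a_1)|>1/\sqrt N$, then $a_2=a_1\bullet a_1$ carries a strictly larger modulus, the iterates $a_1,a_2,a_3,\dots$ are forced to be distinct vertices, and after exhausting all $N$ vertices one contradicts $\|\varphi_k\|=1$; this yields $|\varphi_k(n)|\le 1/\sqrt N$ everywhere, and \emph{column} normalization then forces equality. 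You instead exploit the orbit structure of $v\mapsto v\bullet i$ directly: finiteness makes the orbit eventually periodic, so each $\sqrt N\,\varphi_k(i)$ is either $0$ or a root of unity, giving the dichotomy $|\varphi_k(i)|\in\{0,1/\sqrt N\}$, and you then rule out zeros by \emph{row} normalization of the unitary matrix $\Phi$. Your version is arguably cleaner — it avoids the case analysis showing the $a_m$ are pairwise distinct — and it yields slightly more as a byproduct, namely that every entry of $\sqrt N\,\Phi$ is a root of unity (a Butson-type refinement of the Hadamard conclusion). Both arguments are complete and correct.
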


\begin{proof}
\begin{enumerate}
\item[(a)]  We first show that graph translation on $G$ is a semigroup, i.e. $T_iT_j=T_{i\bullet j}$ for some semigroup operator $\bullet:\{1,...,N\}\times\{1,...,N\}\to\{1,...,N\}$, if and only if $\sqrt{N}\varphi_k(i)\varphi_k(j)=\varphi_k(i\bullet j)$ for all $l=0,...,N-1$.  By the definition of graph translation, we have,
$$T_iT_jf(n)={N} \Sum_{k=0}^{N-1} \hat f(\lambda_{k})\varphi_{k}^*(j)\varphi_k^*(i)\varphi_k(n)$$ and 
$$T_\ell f(n) = \sqrt{N} \Sum_{k=0}^{N-1} \hat f(\lambda_k)\varphi_k^*(\ell)\varphi_k(n).$$
Therefore, $T_iT_jf=T_{i\bullet j} f$ will hold for any function $f:V\to \R$ if and only if $\sqrt{N}\varphi_k(i)\varphi_k(j)=\varphi_k(i\bullet j)$ for every $k\in \{0,...,N-1\}$.

\item[(b)]  We show next that $\sqrt{N}\varphi_k(i)\varphi_k(j)=\varphi_k(i\bullet j)$ for all $k=0,...,N-1$ only if the eigenvectors are constant amplitude, namely $1/\sqrt{N}$ by the orthonormality of the eigenvectors.  Assume $\sqrt{N}\varphi_k(i)\varphi_k(j)=\varphi_k(i\bullet j)$, which, in particular, implies $\sqrt{N}\varphi_k(i)\varphi_k(i)=\sqrt{N}\varphi_k(i)^2=\varphi_k(i\bullet i)$.

Suppose that there exists $k\in\{0,...,N-1\}$ such that $|\varphi_k(a_1)|>1/\sqrt{N}$ for some $a_1\in\{1,...,N\}$.  Then $\sqrt{N}|\varphi_k(a_1)|^2>|\varphi_k(a_1)|$ and so $a_1\bullet a_1 =a_2$ for some $a_2\in\{1,...,N\}$.  Note that $a_2\neq a_1$, if not $|\varphi_k(a_1)|=\sqrt{N}|\varphi_k(a_1)|^2$  which is impossible. Thus,  $a_2\in\{1,...,N\}\setminus \{a_1\}$. Then since $|\varphi_k(a_2)|=\sqrt{N}|\varphi_k(a_1)|^2>|\varphi_k(a_1)|>1/\sqrt{N}$ we can repeat the same argument to assert $a_2\bullet a_2=a_3$ for some $a_3\in\{1,...,N\}\setminus\{a_1,a_2\}$.  Repeat this procedure $N$ times give $|\varphi_k(n)|>1/\sqrt{N}$ for all vertices $n\in\{1, ..., N\}$, which contradicts the notion that $\|\varphi_k\|=1$.

Therefore we have shown that the graph translation operator forms a semigroup only if $|\varphi_k(n)|\leq 1/\sqrt{N}$ for all $n=1, ..., N$ and $k=0, ..., N=1$.  But again, since each eigenvector must satisfy $\|\varphi_k\|=1$, we can strengthen this condition to $|\varphi_k(n)|= 1/\sqrt{N}$ for all $n=1, ..., N$ and $k=0, ..., N=1$.  Since $\Phi$ is an orthogonal matrix, i.e. $\Phi\Phi^*=\Phi^*\Phi=I$, then $\Phi=(1/\sqrt{N}) H$, where $H$ is a Hadamard matrix.
\end{enumerate}
\end{proof}

\begin{remark}
\begin{enumerate}
\item[(a)]  If we relax the constraint that $\Phi$ must be real-valued, we can obtain graphs with constant-amplitude eigenfunctions that allow the translation operators to form a (semi)group.  For the cycle graph on $N$ nodes, $C_N$, one can choose $\Phi$ equal to the discrete Fourier transform (DFT) matrix, where $\Phi_{nm}=e^{-2\pi i (n-1)(m-1)/N}$ .  Under this construction, we have $T_iT_j=T_{i+j \pmod N}$.

\item[(b)]  It is shown in \cite[Theorem 5]{BarFalKir2011} that if $\Phi=(1/\sqrt{N})H$ for Hadamard $H$, then the spectrum of the Laplacian, $\sigma(L)$, must consist entirely of even integers.  The authors of \cite{Fallat2005} explore graphs with integer spectrum but do not address the case of a spectrum of only even integers.

\item[(c)]  The converse to Theorem \ref{th:Hadamard} does not necessarily hold.  That is, if the eigenvector matrix $\Phi=1/\sqrt{N} H$, for a renormalized Hadamard matrix $H$, then the translation operators on $G$ need not form a semigroup.  For example, consider the real Hadamard matrix, $H$, of order 12 given by 
$$ H=\left[ \begin{array}{rrrrrrrrrrrr}
  1     &1    & 1    & 1  &   1   &     1 &    1&     1 &    1 &    1  &   1 &    1\\
     1  &  -1 &    1 &   -1 &    1&     1 &    1&    -1&    -1&    -1 &    1&    -1\\
     1  &  -1 &   -1 &    1 &   -1&     1 &    1&     1&    -1&    -1 &   -1&     1\\
     1  &   1 &   -1 &   -1 &    1&    -1 &    1&     1&     1&    -1 &   -1&    -1\\
     1  &  -1 &    1 &   -1 &   -1&     1 &   -1&     1&     1&     1 &   -1&    -1\\
     1  &  -1 &   -1 &    1 &   -1&    -1 &    1&    -1&     1&     1 &    1&    -1\\
     1  &  -1 &   -1 &   -1 &    1&    -1 &   -1&     1&    -1&     1 &    1&     1\\
     1     &1 &   -1 &   -1 &   -1&     1 &   -1&    -1&     1&    -1 &    1&     1\\
     1    & 1 &    1 &   -1 &   -1&    -1 &    1&    -1&    -1&     1 &   -1&     1\\
     1   &  1 &    1 &    1 &   -1&    -1 &   -1&     1&    -1&    -1 &    1&    -1\\
     1  &  -1 &    1 &    1 &    1&    -1 &   -1&    -1&     1&    -1 &   -1&     1\\
     1 &    1 &   -1 &    1 &    1&     1 &   -1&    -1&    -1&     1 &   -1&    -1
     
     \end{array}\right].
$$

Then the second and third columns multiplied componentwise equals the vector
$$[1,	-1,	1	,-1,	-1,	1	,1,	-1,	1,	1,	-1,	-1]^\top,$$
which does not equal any of the columns of $H$.

\item[(d)]  What kinds of graphs have a Hadamard eigenvector matrix?  The authors of \cite{BarFalKir2011} prove that if $N$ is a multiple of 4 for which a Hadamard matrix exists, then the complete graph on $N$ vertices, $K_N$, is one such graph.
\end{enumerate}

\end{remark}

Unlike in the classical case in $\R^d$, graph translation is not an isometric operation, i.e., $\norm{T_if}_2\neq \norm{f}_2$.  However, {\cite[Lemma 1]{vertexfrequency}} provides the following estimates:

\begin{equation}\label{eq:transestimate}
|\hat f(0)| \leq \norm{T_i f}_2\leq \sqrt{N}  \max_{k\in\{0,1,...,N-1\}} |\varphi_\ell(i)|   \norm{f}_2 \leq \sqrt{N}  \max_{k\in\{0,1,...,N-1\}} \norm{\varphi_\ell}_\infty   \norm{f}_2
\end{equation}

Furthermore, unlike the Euclidean notion of translation, graph translation need not be invertible.  Theorem \ref{th:isometry} characterizes all graphs for which the operator $T_i$ is not invertible.  Additionally, Hadamard matrices appear again in characterizing when graph translation does act as a unitary operator.

\begin{theorem}\label{th:isometry}
The graph translation operator $T_i$  fails to be invertible if and only if there exists some $k=1,...,N-1$ for which $\varphi_k(i)=0$.  In particular, the nullspace of $T_i$ has a basis equal to those eigenvectors that vanish on the $i$'th vertex.  

Additionally, $T_i$ is unitary if and only if $|\varphi_k(i)|=1/\sqrt{N}$ for all $k=0,1,...,N-1$ and all graph translation operators are unitary if and only if $\sqrt{N}\Phi$ is a Hadamard matrix.
\end{theorem}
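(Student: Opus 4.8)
The plan is to recognize that $T_i$ has a clean spectral factorization through the eigenvector matrix $\Phi$, which collapses every assertion into an elementary fact about diagonal matrices.

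First I would rewrite the matrix appearing in \eqref{transmatrix}. Since $\hat f=\Phi^*f$, and that matrix has $(n,k)$ entry $\varphi_k(n)\cdot\sqrt N\,\varphi_k^*(i)$, it factors as $\Phi\,\mathrm{diag}\!\big(\sqrt N\varphi_0^*(i),\dots,\sqrt N\varphi_{N-1}^*(i)\big)$. Hence
\begin{equation*}
T_i=\sqrt N\,\Phi D_i\Phi^*,\qquad D_i:=\mathrm{diag}\big(\varphi_0^*(i),\varphi_1^*(i),\dots,\varphi_{N-1}^*(i)\big).
\end{equation*}
Because $\Phi$ is unitary (orthogonal in the real case), $T_i$ is unitarily equivalent to the diagonal matrix $\sqrt N D_i$, so rank, kernel (transported by $\Phi$), invertibility, and unitarity of $T_i$ are all read off from $\sqrt N D_i$.

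For invertibility: $\sqrt N D_i$ is invertible iff every diagonal entry $\sqrt N\varphi_k^*(i)$ is nonzero. By Theorem~\ref{th:constvector} we have $\varphi_0\equiv 1/\sqrt N$, so the $k=0$ entry equals $1$; thus $T_i$ fails to be invertible iff $\varphi_k(i)=0$ for some $k\in\{1,\dots,N-1\}$, which is the first claim. For the kernel: $T_if=0$ iff $D_i\Phi^*f=0$ iff $\hat f(\lambda_k)=\langle f,\varphi_k\rangle$ vanishes for every $k$ with $\varphi_k(i)\neq 0$; by the inversion formula \eqref{eq:invgraphFT} this says exactly $f\in\operatorname{span}\{\varphi_k:\varphi_k(i)=0\}$, and orthonormality of these $\varphi_k$ makes them a basis of $\ker T_i$.

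For unitarity: a diagonal matrix is unitary iff each of its entries has modulus $1$, so $T_i$ is unitary iff $|\sqrt N\varphi_k^*(i)|=1$, i.e. $|\varphi_k(i)|=1/\sqrt N$, for all $k$, giving the third claim. Demanding this for every $i$ simultaneously says that every entry of $\sqrt N\Phi$ has modulus $1$; together with $(\sqrt N\Phi)(\sqrt N\Phi)^*=N I$, which follows from $\Phi\Phi^*=I$, this is precisely the definition of a (complex, or in the real-valued case real) Hadamard matrix, and conversely any $\Phi=(1/\sqrt N)H$ with $H$ Hadamard makes every $T_i$ unitary. The only delicate points — and the nearest thing to an obstacle here — are consistently peeling off the trivial $k=0$ coordinate and invoking the correct complex notion of Hadamard matrix in the complex-eigenvector case; everything else is bookkeeping with diagonal matrices.
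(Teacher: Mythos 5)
Your proof is correct and follows essentially the same route as the paper's: both arguments reduce every claim to the fact that $T_i$ is diagonal in the eigenbasis $\Phi$. The only difference is that you make the factorization $T_i=\sqrt N\,\Phi D_i\Phi^*$ explicit at the outset, whereas the paper reaches the same diagonal structure by computing the entries of $A_i^*A_i$ and then verifies the nullspace basis by a separate direct calculation; your packaging is the cleaner one, and it is precisely the Fourier-multiplier viewpoint the paper itself records afterward in Corollary~\ref{cor:multipliers}.
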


\begin{proof}

By \eqref{transmatrix}, the operator $T_i$ can be written as the matrix
\begin{eqnarray}T_i&=&\sqrt{N} \begin{pmatrix}  
\varphi_0^*(i)\varphi_0(1) & \varphi_1^*(i)\varphi_1(1) & \cdots & \varphi_{N-1}^*(i)\varphi_{N-1}(1)\\
\vdots & \vdots& \cdots & \vdots&\\
\varphi_0^*(i)\varphi_0(N) & \varphi_1^*(i)\varphi_1(N) & \cdots & \varphi_{N-1}^*(i)\varphi_{N-1}(N)
\end{pmatrix}\Phi^* \notag\\
&=:&\sqrt{N}A_i \Phi^*\label{Aimatrix}
\end{eqnarray}

We can compute the rank of $T_i^*T_i=N\Phi A_i^*A_i\Phi^*$.  Since $\Phi$ is an $N\times N$ matrix of full rank, we can express the rank of $T_i$ solely in terms of the matrix $A_i$, i.e.,
$$\operatorname{rank}(T_i)=\operatorname{rank}(T_i^*T_i)=\operatorname{rank}(\Phi A_i^*A_i\Phi^*)=\operatorname{rank}(A_i^*A_i).$$  

We can explicitly compute for any indices $n,m\in\{1,...,N\}$,
\begin{eqnarray*}
(A_i^*A_i)(n,m)=\Sum_{k=1}^N \overline{A_i(k,n)}A_i(k,m)&=&\Sum_{k=0}^{N-1} \varphi_n^*(k)\varphi_n(i)\varphi_m(k)\varphi_m^*(i)\\
&=& \varphi_n(i)\varphi_m^*(i)\Sum_{k=0}^{N-1}\varphi_n^*(k)\varphi_m(k)\\
&=& \varphi_n(i)\varphi_m^*(i)\delta_n(m).
\end{eqnarray*}
Hence, $A_i^*A_i$ is a diagonal matrix with diagonal entries $(A_i^*A_i)(n,n)= |\varphi_n(i)|^2$.
Therefore,
\begin{equation}\label{Ti*Ti}
T_i^*T_i=N\sum_{k=0}^{N-1}|\varphi_{k}(i)|^2\varphi_k\otimes \varphi_{k}^{*}.
\end{equation}

Consequently, this proves that $\operatorname{rank}(T_i)=|\{k:\varphi_k(i)\neq 0\}|$ and hence $T_i$ is invertible if and only if $\varphi_k(i)\neq 0$ for all $k$.

 Furthermore, $T_iT_i^{*}=\tfrac{N}{N}\sum_{k=0}^{N-1}\varphi_k\otimes \varphi_{n}^{*}=I$ if and only if $|\varphi_n(i)|=1/\sqrt{N}$ for all $n=0,1,...,N-1$.  

Suppose now that $\varphi_{k_j}(i)=0$ for $\{k_j\}_{j=1}^K\subseteq\{1,...,N-1\}$. Hence, $\operatorname{rank}(T_i)=N-K$.  Then for each $j\in\{1,...,K\}$ and any $n\in\{1,...,N\}$ we have 
$$T_i\varphi_{k_j}(n)=\sqrt{N}\Sum_{k=0}^{N-1}\hat\varphi_{k_j}(\lambda_k)\varphi_k^*(i)\varphi_k(n)=\sqrt{N} \varphi_{k_j}^*(i)\varphi_{k_j}(n)=0.$$
Therefore, $\varphi_{k_j}$ is in the null space of $T_i$ for every $j=1,...,K$.  Thus $\{\varphi_{k_j}\}_{j=1}^K$ is a collection of $K$ orthogonal unit-norm vectors in the null space which has dimension $N-\operatorname{rank}(T_i)=K$, hence they form an orthonormal basis for the null space of $T_i$ which proves the claim about the null space of $T_i$.

Finally, if $\sqrt{N}|\varphi_n(i)|=1$ for all $n,i=1,...,N$ then $\sqrt{N}\Phi$ is Hadamard which concludes proof.

\end{proof}

\begin{corollary}\label{cor:Tiinvertible}
If $\varphi_k(i)\neq0$ for all $k=1,...,N-1$, then the graph translation operator $T_i$ is invertible and its inverse is given by 
$$T_i^{-1}=\frac{1}{\sqrt N} \Phi\begin{pmatrix}  
\varphi_0^*(1)  {\varphi_0^*(i)}^{-1}& \varphi_0^*(2) {\varphi_0^*(i)}^{-1}& \cdots & \varphi_0^*(N) {\varphi_0^*(i)}^{-1}\\
\vdots & \vdots& \cdots & \vdots&\\
\varphi_{N-1}^*(1)  {\varphi_{N-1}^*(i)}^{-1}& \varphi_{N-1}^*(2)  {\varphi_{N-1}^*(i)}^{-1} &\cdots & \varphi_{N-1}^*(N)  {\varphi_{N-1}(i)}^{-1}\\
\end{pmatrix}.$$
\end{corollary}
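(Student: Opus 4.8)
The plan is to recycle the factorization of $T_i$ obtained in the proof of Theorem~\ref{th:isometry} and invert it factor by factor. Recall that there we wrote $T_i=\sqrt{N}\,A_i\,\Phi^{*}$, where $A_i$ is the $N\times N$ matrix whose $(n,k)$ entry is $\varphi_k^{*}(i)\varphi_k(n)$. The observation I would make explicit is that $A_i$ itself factors through a diagonal matrix: $A_i=\Phi\,D_i$ with $D_i=\operatorname{diag}\bigl(\varphi_0^{*}(i),\varphi_1^{*}(i),\dots,\varphi_{N-1}^{*}(i)\bigr)$, which is immediate from the definition of matrix multiplication since the $(n,k)$ entry of $\Phi$ is $\varphi_k(n)$. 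Hence $T_i=\sqrt{N}\,\Phi\,D_i\,\Phi^{*}$.

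Next I would verify that $D_i$ is invertible under the stated hypothesis. By Theorem~\ref{th:constvector}, $\varphi_0\equiv 1/\sqrt{N}$, so $\varphi_0^{*}(i)=1/\sqrt{N}\neq 0$; combined with the hypothesis $\varphi_k(i)\neq 0$ for $k=1,\dots,N-1$, every diagonal entry of $D_i$ is nonzero, and $D_i^{-1}=\operatorname{diag}\bigl(\varphi_0^{*}(i)^{-1},\dots,\varphi_{N-1}^{*}(i)^{-1}\bigr)$. Since $\Phi$ is orthogonal, so that $\Phi^{-1}=\Phi^{*}$ and $(\Phi^{*})^{-1}=\Phi$, all three factors of $T_i=\sqrt{N}\,\Phi D_i\Phi^{*}$ are invertible, and therefore $T_i$ is invertible with
\[
T_i^{-1}=\frac{1}{\sqrt{N}}\,\Phi\,D_i^{-1}\,\Phi^{*}.
\]

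Finally I would expand $D_i^{-1}\Phi^{*}$: its $(k,n)$ entry equals $\varphi_k^{*}(i)^{-1}\varphi_k^{*}(n)$, which is exactly the $(k,n)$ entry of the matrix displayed in the statement; left-multiplying by $\tfrac{1}{\sqrt{N}}\Phi$ then produces the claimed formula. (As an alternative, one could bypass the factorization and instead check directly that the product of $T_i$ with the proposed inverse collapses to $I$ using the orthonormality relation $\sum_{k}\varphi_k^{*}(m)\varphi_k(n)=\delta_{mn}$, but the factorization route is cleaner.) The argument is essentially bookkeeping, so I do not expect a genuine obstacle; the only point deserving a word of care is the $k=0$ coordinate, where one must invoke $\varphi_0\equiv 1/\sqrt{N}$ — rather than the stated hypothesis, which only covers $k\ge 1$ — to guarantee that $\varphi_0(i)\neq 0$ and hence that $D_i$ is genuinely invertible.
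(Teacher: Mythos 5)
Your proposal is correct, and it reaches the same explicit inverse by essentially the same mechanism as the paper: the paper also starts from the factorization $T_i=\sqrt{N}A_i\Phi^{*}$ of \eqref{Aimatrix} and verifies that the displayed matrix is $A_i^{-1}$ by the entrywise computation $A_iA_i^{-1}(n,m)=\sum_k\varphi_k(n)\varphi_k^{*}(m)=\delta_n(m)$ (and likewise for $A_i^{-1}A_i$), then conjugates by $\Phi$. What you do differently is to make explicit the further factorization $A_i=\Phi D_i$ with $D_i=\operatorname{diag}\bigl(\varphi_0^{*}(i),\dots,\varphi_{N-1}^{*}(i)\bigr)$, so that $T_i=\sqrt{N}\,\Phi D_i\Phi^{*}$ and the inverse is read off factor by factor rather than checked by a double-index sum. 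This diagonalization is implicit in the paper's identity \eqref{Ti*Ti} but never written down; spelling it out is slightly cleaner, and it has the bonus that the invertibility criterion and null-space description of Theorem~\ref{th:isometry} fall out at a glance, since $T_i$ is orthogonally conjugate to $\sqrt{N}D_i$. Your closing remark about the $k=0$ coordinate is also a genuine point of care that the paper passes over silently: the stated hypothesis only forbids zeros for $k\ge 1$, and one must invoke connectedness via Theorem~\ref{th:constvector} to get $\varphi_0(i)=1/\sqrt{N}\neq 0$ and hence the invertibility of $D_i$ (equivalently, the nonvanishing of the first row of the displayed matrix).
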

\begin{proof}
We shall first prove that the inverse to the matrix $A_i$ given in \eqref{Aimatrix} is given by 
$$A_i^{-1}=\begin{pmatrix}  
\varphi_0^*(1)  {\varphi_0^*(i)}^{-1}& \varphi_0^*(2) {\varphi_0^*(i)}^{-1}& \cdots & \varphi_0^*(N) {\varphi_0^*(i)}^{-1}\\
\vdots & \vdots& \cdots & \vdots&\\
\varphi_{N-1}^*(1)  {\varphi_{N-1}^*(i)}^{-1}& \varphi_{N-1}^*(2)  {\varphi_{N-1}^*(i)}^{-1} &\cdots & \varphi_{N-1}^*(N)  {\varphi_{N-1}(i)}^{-1}\\
\end{pmatrix}.$$
We can then compute 
\begin{eqnarray*}
A_iA_i^{-1}(n,m)&=&\Sum_{k=1}^N A_i(n,k) A_i^{-1}(k,m)=\Sum_{k=0}^{N-1} \varphi_k^*(i)\varphi_k(n) \varphi_k^*(m){\varphi_k^*(i)}^{-1}\\
&=&\Sum_{k=0}^{N-1} \varphi_k(n)\varphi_k^*(m)=\delta_n(m),
\end{eqnarray*}
and similarly

\begin{eqnarray*}
A_i^{-1}A_i(n,m)&=&\Sum_{k=1}^N A_i^{-1}(n,k) A_i(k,m)=\Sum_{k=1}^N \varphi_{n-1}^*(k)\varphi_{n-1}^*(i)^{-1}\varphi_{m-1}(k)\varphi_{m-1}^*(i)\\
&=&\varphi_{n-1}^*(i)^{-1}\varphi_{m-1}^*(i)\Sum_{k=1}^N \varphi_{n-1}^*(k)\varphi_{m-1}(k)=\varphi_{n-1}^*(i)^{-1}\varphi_{m-1}^*(i)\delta_n(m),
\end{eqnarray*}
which proves $A_i^{-1}A_i=A_iA_i^{-1}=I_N$.

Thus we can verify by the orthonormality of $\Phi$ that 
$$T_iT_i^{-1}=A_i\Phi^*\Phi A_i^{-1}=I_N=\Phi A_i^{-1}A_i\Phi^*=T_i^{-1}T_i.$$
\end{proof}

Since the invertibility of the graph translation operators depends entirely on when and where eigenvectors vanish, Section \ref{ch:eigenvector} is devoted to studying the support of graph eigenvectors.

\begin{remark}
The results of Theorem \ref{th:isometry} and its corollary are not applicable solely to the graph translation operators.  They can be generalized to a broader class of operators on graphs, in particular, operators that act as Fourier multipliers.  An operator $A$ is a Fourier multiplier with symbol $a$ if 
$$\widehat{Af}(\xi)=\hat a(\xi) \hat f(\xi)$$
for some function $a$ defined in the spectral domain.

Indeed graph translation is defined as a Fourier multiplier since it is defined as
$$\widehat{T_i f}(\lambda_k)=\varphi_k(i)\hat f(\lambda_k).$$
Hence, Theorem \ref{th:isometry} and Corollary \ref{cor:Tiinvertible} can be generalized to Fourier multipliers in the following way
\begin{corollary}\label{cor:multipliers}
Let $A$ be a Fourier multiplier whose action on $f:V\to\C$ is defined in the spectral domain $\widehat{Af}(\lambda_k)=\hat a(\lambda_k) \hat f(\lambda_k)$.  Then $A$ is invertible if and only if $\hat a(\lambda_k)\neq 0$ for all $\lambda=0,1,...,N-1$.  Furthermore, its inverse $A^{-1}$ will be given by the Fourier multiplier
$$\widehat{A^{-1}f}(\lambda_k)=\hat a(\lambda_k)^{-1} \hat f(\lambda_k).$$
\end{corollary}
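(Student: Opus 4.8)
The plan is to reduce Corollary~\ref{cor:multipliers} to Theorem~\ref{th:isometry} and Corollary~\ref{cor:Tiinvertible} by observing that every Fourier multiplier is diagonalized by the same orthogonal matrix $\Phi$, so the entire argument is the spectral-domain skeleton of what was already done for $T_i$. First I would write $A$ in matrix form. Since $\widehat{Af}(\lambda_k)=\hat a(\lambda_k)\hat f(\lambda_k)$, and $\hat f=\Phi^* f$, $f=\Phi\hat f$, we have $A=\Phi\,\mathrm{diag}(\hat a(\lambda_0),\dots,\hat a(\lambda_{N-1}))\,\Phi^*$. This is the analogue of the factorization $T_i=\sqrt N A_i\Phi^*$ in~\eqref{Aimatrix}, except that the inner diagonal matrix now has entries $\hat a(\lambda_k)$ rather than $\sqrt N\,\varphi_k^*(i)\varphi_k(i)=\sqrt N|\varphi_k(i)|^2$ (indeed, specializing $\hat a(\lambda_k)=\varphi_k(i)$ recovers the translation case).

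Next I would read off invertibility. Because $\Phi$ is orthogonal hence invertible, $A$ is invertible if and only if the diagonal matrix $\mathrm{diag}(\hat a(\lambda_k))$ is invertible, which holds if and only if $\hat a(\lambda_k)\neq 0$ for every $k=0,1,\dots,N-1$. (When some $\hat a(\lambda_k)=0$, the eigenvector $\varphi_k$ lies in the kernel, exactly as in the null-space statement of Theorem~\ref{th:isometry}.) Then, assuming $\hat a(\lambda_k)\neq 0$ for all $k$, the inverse of the diagonal matrix is $\mathrm{diag}(\hat a(\lambda_k)^{-1})$, so
$$A^{-1}=\Phi\,\mathrm{diag}\bigl(\hat a(\lambda_0)^{-1},\dots,\hat a(\lambda_{N-1})^{-1}\bigr)\,\Phi^*,$$
and conjugating by $\Phi^*$ again shows that this is precisely the Fourier multiplier with symbol $\hat a(\lambda_k)^{-1}$, i.e. $\widehat{A^{-1}f}(\lambda_k)=\hat a(\lambda_k)^{-1}\hat f(\lambda_k)$. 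One should also note $A A^{-1}=A^{-1}A=\Phi\Phi^*=I$ follows immediately from $\mathrm{diag}(\hat a(\lambda_k))\mathrm{diag}(\hat a(\lambda_k)^{-1})=I_N$ and the orthonormality of $\Phi$, mirroring the verification at the end of the proof of Corollary~\ref{cor:Tiinvertible}.

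There is essentially no obstacle here; the only thing to be careful about is bookkeeping of whether $\hat a$ ranges over the multiset of eigenvalues or over the index set $\{0,\dots,N-1\}$. If $L$ has repeated eigenvalues, $\hat a$ must be interpreted as a function on the indices $k$ (or, equivalently, be well-defined and constant on each eigenspace), since the multiplier acts coordinatewise on the Fourier coefficients $\hat f(\lambda_k)=\langle f,\varphi_k\rangle$; with that convention the statement and proof go through verbatim. I would therefore present the proof as a two-line reduction: factor $A=\Phi\,\mathrm{diag}(\hat a(\lambda_k))\,\Phi^*$, invoke invertibility of $\Phi$ to transfer rank/invertibility to the diagonal factor, and invert the diagonal factor explicitly.
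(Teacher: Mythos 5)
Your proof is correct and is essentially the argument the paper intends: the corollary is stated inside a remark without an explicit proof, as a direct generalization of Theorem~\ref{th:isometry} and Corollary~\ref{cor:Tiinvertible}, and your factorization $A=\Phi\,\mathrm{diag}\bigl(\hat a(\lambda_0),\dots,\hat a(\lambda_{N-1})\bigr)\,\Phi^*$ is exactly the spectral-domain version of the $A_i\Phi^*$ computation used there. One tiny slip in your aside: in the translation case the diagonal entries are $\sqrt N\,\varphi_k^*(i)$, not $\sqrt N\,|\varphi_k(i)|^2$ (the latter appears only in $T_i^*T_i$), but this does not affect your argument.
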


\end{remark}

\section{Support of Laplacian Fiedler vectors on graphs}\label{ch:eigenvector}

This section proves results about the support of Laplacian eigenvectors on graphs.  In particular, we characterize and describe the set on which eigenvectors vanish.  The Fiedler vector, $\varphi_1$, has unique properties that enable us to prove our main result, Theorem \ref{th:planarballs}, that planar graphs cannot have large regions on which $\varphi_1$ vanishes.  We then construct a family of (non-planar) graphs, called the barren graphs, and prove in Theorem \ref{th:barren} that their Fiedler vectors do vanish on large regions. As seen in Theorem \ref{th:isometry}, the support of eigenvectors will influence the behavior of the graph translation operators defined in the last section.

\subsection{The characteristic set of the Fiedler vector}\label{subsec3.1}
Let $\varphi_1$ denote a Fiedler vector for $L$ on $G$.  We can decompose the vertex set, $V$, into three disjoint subsets, $V=V_+\cup V_-\cup V_0$, where $V_+=\{x\in V: \varphi_1(x)>0\}$, $V_-=\{x\in V: \varphi_1(x)<0\}$, and $V_0=\{x\in V: \varphi_1(x)=0\}$.  The set $V_0$, the set of vertices on which the Fiedler vector vanishes, is referred to in literature as the \emph{characteristic set} of the graph \cite{Bapat1998}.  This vertex decomposition is not a unique property to the graph $G$; any graph can allow multiple such decompositions of the vertex set $V$.  In the case that the algebraic connectivity has higher multiplicities, i.e., $\lambda_1=\lambda_2=\cdots=\lambda_m$ for some $2\leq m \leq N-1$, then each $\varphi_s$ is a Fiedler vector for $1\leq s \leq m$.  Futhermore, any linear combination of $\{\varphi_s\}_{s=1}^m$ will also be a Fiedler vector and yield a different vertex decomposition.  Even in the case when the algebraic connectivity of $G$ is simple, then $-\varphi_1$ is also a Fiedler vector for $G$.  In this case, $V_+$ and $V_-$ can be interchanged but the set $V_0$ is unique to $G$. 

We wish to describe and characterize the sets $V_+$, $V_-$, and $V_0$ for graphs.
Fiedler proved in \cite{Fiedler1975} that the subgraph induced on the vertices $\{v\in V : \varphi_1(v)\geq 0\}=V_+\cup V_0$ forms a connected subgraph of $G$.  Similarly, $V_-\cup V_0$ form a connect subgraph of $G$.  Recently, it was proved in \cite{Urschel2014} that we can relax the statement and show that the subgraphs on $V_+$ and $V_-$ are connected subgraphs of $G$.

The following result guarantees that $V_+$ and $V_-$ are always close in terms of the shortest path graph distance.
\begin{lemma}\label{lem:Zset}
Let $G(V,E,\omega)$ with Fiedler vector $\varphi_1$ inducing the partition of vertices $V=V_+\cup V_- \cup V_0$.  Then $d(V_+,V_-)\leq 2$.  
\end{lemma}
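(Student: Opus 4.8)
The plan is to argue by contradiction. Suppose $d(V_+,V_-)\geq 3$; I will show this forces $\varphi_1>0$ on all of $V$, contradicting the fact that $\varphi_1$ is orthogonal to $\varphi_0\equiv 1/\sqrt N$ (so $\sum_{x\in V}\varphi_1(x)=0$) together with $\varphi_1\neq 0$. The only tools needed are the pointwise eigenvalue equation for the Fiedler vector, namely $\lambda_1\varphi_1(x)=\sum_{y\sim x}\bigl(\varphi_1(x)-\varphi_1(y)\bigr)$, the strict positivity $\lambda_1>0$, and the connectedness of $G$. Note first that $V_+$ and $V_-$ are both nonempty, since $\varphi_1\neq 0$ and $\langle\varphi_1,\varphi_0\rangle=0$.

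I would first record the elementary consequence of the standing hypothesis: since $d(V_+,V_-)\geq 3$, every vertex lying at distance at most $2$ from some vertex of $V_+$ must lie in $V_+\cup V_0$ (it cannot be in $V_-$), hence $\varphi_1\geq 0$ at every such vertex. The main step is then to show that no edge leaves $V_+$, i.e.\ if $u\in V_+$ and $w\sim u$ then $w\in V_+$. Indeed $w\in V_+\cup V_0$ by the previous remark; suppose $w\in V_0$. Each neighbor $w'$ of $w$ satisfies $d(u,w')\leq 2$, so $\varphi_1(w')\geq 0$. Evaluating the eigenvalue equation at $w$ and using $\varphi_1(w)=0$ yields $\sum_{w'\sim w}\varphi_1(w')=0$, a sum of nonnegative terms, which forces $\varphi_1(w')=0$ for every neighbor $w'$ of $w$ — in particular $\varphi_1(u)=0$, contradicting $u\in V_+$. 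Hence $w\in V_+$.

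Finally, since no edge joins $V_+$ to its complement and $G$ is connected with $V_+\neq\emptyset$, we conclude $V_+=V$; but then $\sum_{x\in V}\varphi_1(x)>0$, contradicting $\langle\varphi_1,\varphi_0\rangle=0$. Therefore $d(V_+,V_-)\leq 2$. The delicate point, and the place I expect to spend the most care, is the ``no edge leaves $V_+$'' step: one must use that the distance-$2$ neighborhood of $V_+$ misses $V_-$ in order to guarantee that the would-be $V_0$-neighbor $w$ has all of its own neighbors nonnegative, which is exactly what makes the sign argument at $w$ go through. (Note also that this proof does not require the Fiedler/Urschel connectivity results for $V_+$, $V_-$, $V_+\cup V_0$; they are not needed here.)
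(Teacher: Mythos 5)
Your proof is correct. Both arguments ultimately run on the same engine---the eigenvalue equation at a vertex $w\in V_0$, which (since $\varphi_1(w)=0$) forces $\sum_{w'\sim w}\varphi_1(w')=0$---but you deploy it differently. The paper argues directly: if $V_0=\emptyset$, connectedness gives an edge between the nonempty sets $V_+$ and $V_-$; otherwise it locates an $x\in V_0$ with a neighbor $y$ of nonzero sign and reads off from $\sum_{z\sim x}\varphi_1(z)=0$ that some other neighbor $y'$ carries the opposite sign, exhibiting an explicit path $(y,x,y')$ of length $2$. You instead assume $d(V_+,V_-)\geq 3$ and use the same identity in the ``sum of nonnegative terms vanishes, hence every term vanishes'' direction to show that positivity propagates across every edge leaving $V_+$, whence $V_+=V$ by connectedness, contradicting $\langle\varphi_1,\varphi_0\rangle=0$. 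Your version avoids the case split on whether $V_0$ is empty and never needs to locate a boundary vertex of $V_0$, at the cost of being non-constructive (the paper's proof actually produces the path of length at most $2$). Two small remarks: you list $\lambda_1>0$ among your tools, but your argument never uses it---the identity at $w\in V_0$ holds for any eigenvalue, and it is connectedness plus orthogonality to the constant vector that do the work; and, as you correctly note, the Fiedler--Urschel connectivity results for $V_\pm$ are not needed, which keeps the proof self-contained.
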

\begin{proof}
First consider the case in which $V_0=\emptyset$.  In this case, there necessarily exists an edge $e=(x,y)$ with $x\in V_+$ and $y\in V_-$ and hence $d(V_+,V_-)=1$.

Now consider the case in which $V_0\neq\emptyset$.  Since $G$ is connected we are guaranteed the existence of some $x\in V_0$ and some $y\sim x$ with either $y\in V_+$ or $y\in V_-$.  Since $x\in V_0$, we have 
\begin{equation}
0=\lambda_1 \varphi_1(x)=L\varphi_1(x)=\Sum_{z\sim x} \varphi_1(z)-\varphi_1(x) = \Sum_{z\sim x} \varphi_1(z). \label{Zneighb}
\end{equation} 
Therefore, \eqref{Zneighb} implies the existence of  at least one other neighbor of $x$, call it $y'$, such that $\varphi_1(y')$ has the opposite sign of $\varphi_1(y)$.  Hence we have now constructed a path, namely $(y,x,y')$ connecting $V_+$ and $V_-$ and the lemma is proved.
\end{proof}

Many graphs exhibit the property that eigenvectors $\varphi_k$ for large values of $k$ are highly localized and vanish on large regions of the graph; see \cite{StrichartzLocalized} for an experimental excursion on this phenomenon.  It is perhaps a misconception that eigenvectors corresponding to small eigenvalues, or in particular, the Fiedler vector of graphs have full support.  Indeed the Fiedler vector of the Minnesota graph never achieves value zero.  On the other hand the Fiedler vector of the the graph approximations to the Sierpeinski gasket SG$_n$ can vanish but only along the small number of vertices symmetrically in the center of the graph.  

It was shown in \cite{Bapat1998}, that the cardinality of $V_0$ can be arbitrarily large.  Figure \ref{fig:largeV0} shows a family of graphs that yield sets $V_0$ with arbitrarily large cardinality.  The family is a path graph $P_N$ on an odd number of vertices, except the middle vertex and its edges are duplicated an arbitrarily large number of times.  As evident from Figure \ref{fig:largeV0}, the set $V_0$ is not connected; in fact, no vertex in $V_0$ is connected to any other vertex of $V_0$.

\begin{figure}
\begin{center}
{\includegraphics[scale=.5]{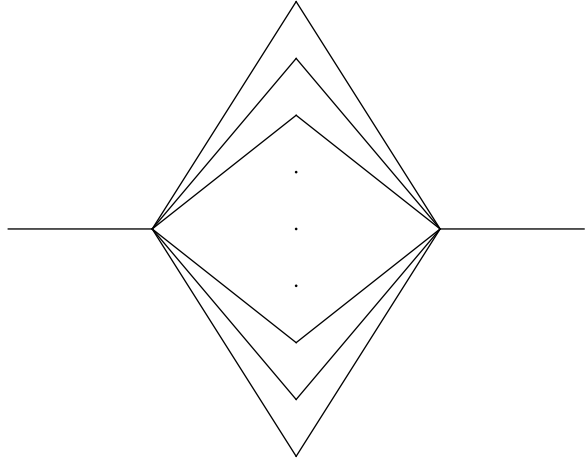}}
\end{center}
\caption{A graph with arbitrarily large set $V_0(\varphi)$}\label{fig:largeV0}
\end{figure}

For the sake of thoroughness we introduce a family of graphs also with arbitrarily large $V_0$ but that is also connected.  We call the family of graphs the \emph{generalized ladder graphs}, denoted $\operatorname{Ladder}(n,m)$.  The standard ladder graphs, Ladder$(n,2)$, is simply the graph Cartesian product, see \cite{imrich2008}, of the path graph of length $n$, $P_n$, and the path graph of length 1, $P_1$.  The graph Ladder$(n,2)$ resembles a ladder with $n$ rungs.  The generalized ladder graphs, Ladder$(n,m)$, are ladders with $n$ rungs and each rung contains $m$ vertices.  Provided that the number of rungs, $n$, is odd, then $V_0$ will be the middle rung and will clearly be connected.  This gives $|V_0|=m$.  Figure \ref{fig:genladder} shows a generalized ladder graph and its Fiedler vector.

\begin{figure}[htbp]
\begin{center}
\includegraphics[scale=.4]{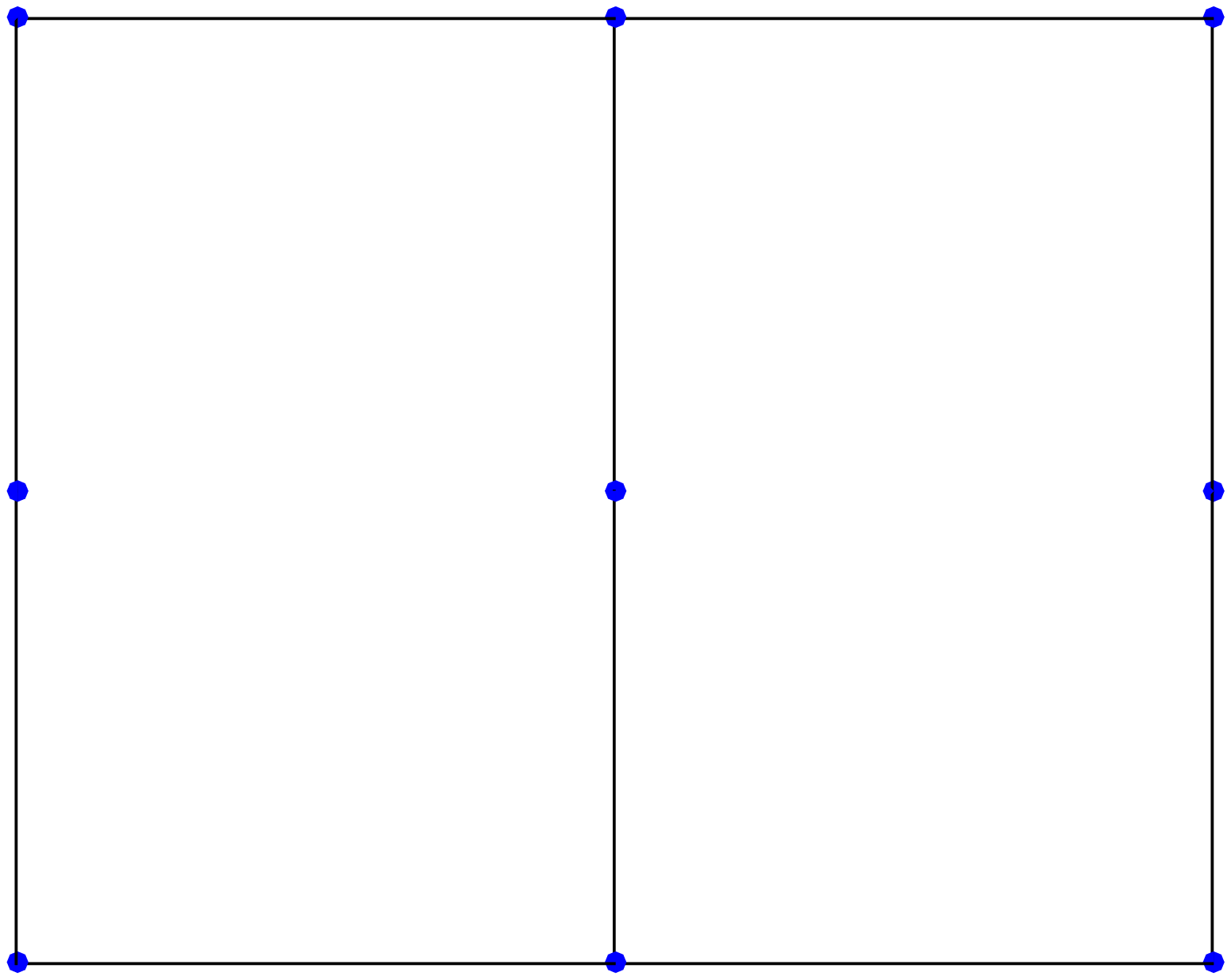}\includegraphics[scale=.4]{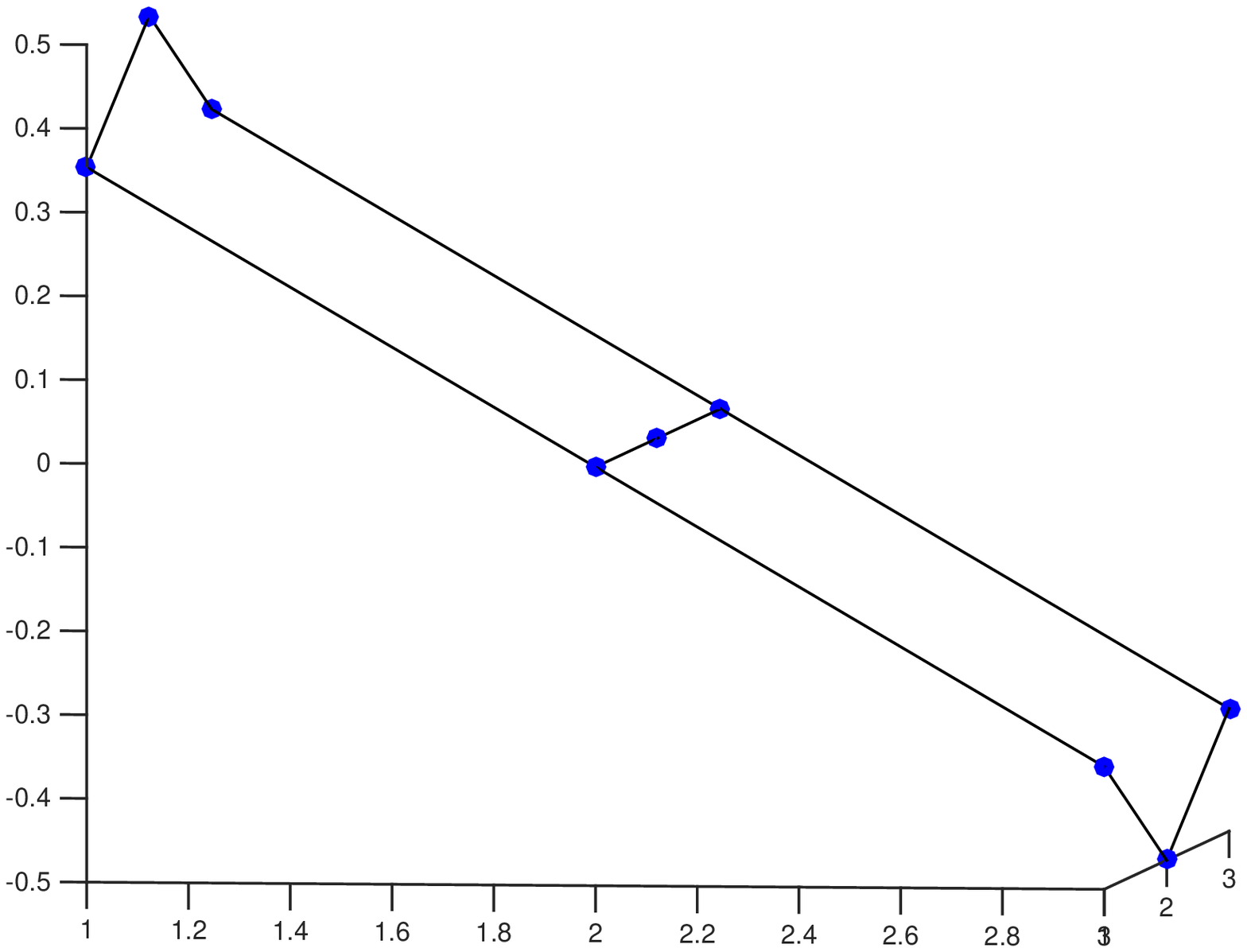}
\end{center}
\caption{Left:  The generalized ladder graph, Ladder$(3,3)$.  Right: A 3-dimensional plot of the Fideler vector $\varphi_1$ on Ladder$(3,3)$.  The set $V_0$ consists of the three vertices making the middle rung of the ladder and contains one ball of three vertices.}\label{fig:genladder}
\end{figure}

The generalized ladder graph provides an example of a graph with a connected characteristic set.  Observe in Figure \ref{fig:genladder} however, that each vertex in $V_0$ is connected to at most two vertices.  We then pose the question as to whether or not there exist graphs for which a vertex $V_0$ has three or more neighbors all contained in $V_0$.  It is simpler to state this property using the definition of a graph ball, given in Definition \ref{def:graphball}.

The following proposition shows that the Fiedler vector cannot be constant-valued on any balls within $V_+$ and $V_-$.
\begin{proposition}\label{prop:flatballs}
Let $\varphi_1$ be the Fiedler vector for the Laplacian on graph $G$ and suppose $B_r(x)\subseteq V_+$ or $B_r(x)\subseteq V_-$ for $r\geq 1$.  Then $\varphi_1$ cannot be constant-valued on $B_r(x)$.
\end{proposition}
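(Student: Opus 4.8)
The plan is a one-step argument by contradiction exploiting the pointwise form of the Laplacian \eqref{pointwiseL} at the \emph{center} of the ball. Suppose, say, $B_r(x)\subseteq V_+$ (the case $B_r(x)\subseteq V_-$ is identical, or follows by replacing $\varphi_1$ with $-\varphi_1$), and suppose toward a contradiction that $\varphi_1$ is constant on $B_r(x)$, i.e. $\varphi_1(y)=c$ for all $y\in B_r(x)$, where $c>0$ because $x\in V_+$.

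The key observation is that since $r\geq 1$ we have $B_1(x)\subseteq B_r(x)$, so every neighbor $z\sim x$ satisfies $d(x,z)=1\leq r$ and hence $\varphi_1(z)=c$, and of course $\varphi_1(x)=c$ as well. Now evaluate the eigenvalue equation $L\varphi_1=\lambda_1\varphi_1$ at $x$ using \eqref{pointwiseL}:
\[
\lambda_1 c=\lambda_1\varphi_1(x)=L\varphi_1(x)=\sum_{z\sim x}\bigl(\varphi_1(x)-\varphi_1(z)\bigr)=\sum_{z\sim x}(c-c)=0.
\]
Since $G$ is connected, Theorem~\ref{th:constvector} gives $\lambda_1>0$, so this forces $c=0$, contradicting $c>0$. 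Hence $\varphi_1$ cannot be constant on $B_r(x)$.

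There is essentially no obstacle here: the only point to be careful about is to apply the eigenvalue identity at the center $x$ rather than at a boundary vertex of $B_r(x)$, because it is precisely the center whose full neighborhood lies inside the ball. The hypothesis $r\geq 1$ is exactly what guarantees this, and the argument does not even use the full strength of $r\geq 1$ beyond $r=1$. (One could equivalently phrase the conclusion as: on any ball $B_r(x)$ contained in $V_+$ or $V_-$, the center $x$ must have a neighbor with a strictly different value of $\varphi_1$, which would lie in the same half $V_+$ or $V_-$ by assumption.)
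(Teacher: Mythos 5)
Your argument is correct and is essentially identical to the paper's proof: both reduce to the $r=1$ case, evaluate the pointwise Laplacian at the center $x$ (whose entire neighborhood lies in the ball) to get $L\varphi_1(x)=0$, and contradict $\lambda_1\varphi_1(x)>0$. No issues.
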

\begin{proof}
It suffices to prove the claim for $r=1$.  Without loss of generality, assume $B_1(x)\subseteq V_+$ and suppose that $\varphi_1$ is constant on $B_1(x)$.  Then 
$$L\varphi_1(x)=\Sum_{y\sim x} \omega_{x,y} (\varphi_1(x)-\varphi_1(y))=0,$$ since $y\sim x$ implies $y\in B_1(x)$ and $\varphi_1$ is constant on that ball.  However, $L \varphi_1(x)=\lambda_1\varphi_1(x)>0$ since $\lambda_1>0$ and $\varphi_1(x)>0$ on $V_+$.  This is a contradiction and the proof is complete.
\end{proof}

The result of Proposition \ref{prop:flatballs} can be formulated in terms on \emph{any} non-constant eigenvector of the Laplacian, not just a Fiedler vector.
\begin{corollary}
Any non-constant eigenvector of the Laplacian, $\varphi_k$, associated with eigenvalue $\lambda_k>0$ cannot be constant on any ball contained in the positive vertices $\{i\in V: \varphi_k(i)>0\}$ or negative vertices $\{i\in V: \varphi_k(i)<0\}$ associated to that eigenvector.
\end{corollary}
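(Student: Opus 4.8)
The plan is to mimic the proof of Proposition~\ref{prop:flatballs} almost verbatim; the only new features are that we replace the Fiedler vector by an arbitrary non-constant eigenvector $\varphi_k$ and the eigenvalue $\lambda_1$ by a general positive eigenvalue $\lambda_k>0$, and that the role of $V_+,V_-$ is played by the positive and negative sets of $\varphi_k$. Since the statement concerns a single ball, the first step is the reduction to $r=1$: if $B_r(x)$ is contained in the set of positive vertices $P:=\{i\in V:\varphi_k(i)>0\}$ (resp. the negative set), then $B_1(x)\subseteq B_r(x)\subseteq P$ as well, and constancy of $\varphi_k$ on $B_r(x)$ forces constancy on the smaller ball $B_1(x)$; hence it suffices to derive a contradiction in the case $r=1$.

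Next I would argue by contradiction. Assume $B_1(x)\subseteq P$ and that $\varphi_k$ is constant on $B_1(x)$; the case where $B_1(x)$ lies in the negative set is handled identically, or simply by replacing $\varphi_k$ with $-\varphi_k$, which is again a non-constant eigenvector associated to the same eigenvalue $\lambda_k$. Every neighbor $y$ of $x$ satisfies $d(x,y)=1$, hence $y\in B_1(x)$, so $\varphi_k(x)-\varphi_k(y)=0$ for all $y\sim x$. Plugging this into the pointwise formula~\eqref{pointwiseL} yields $L\varphi_k(x)=\Sum_{y\sim x}\varphi_k(x)-\varphi_k(y)=0$. On the other hand, since $\varphi_k$ is an eigenvector for $\lambda_k$, we have $L\varphi_k(x)=\lambda_k\varphi_k(x)$, and this quantity is strictly positive because $\lambda_k>0$ by hypothesis and $\varphi_k(x)>0$ since $x\in B_1(x)\subseteq P$. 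This contradicts $L\varphi_k(x)=0$, which completes the proof.

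I do not anticipate any real obstacle: the entire content is the same local computation as in Proposition~\ref{prop:flatballs}, and the weight $\omega_{x,y}$ appearing there may simply be dropped since the corollary is stated for the unweighted Laplacian of~\eqref{pointwiseL}. The only points meriting a word of care are purely bookkeeping, namely making the $r=1$ reduction explicit (so that the argument is phrased for one ball) and observing that the positive/negative dichotomy is symmetric under $\varphi_k\mapsto-\varphi_k$, so that a single case genuinely suffices.
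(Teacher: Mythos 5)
Your proof is correct and follows exactly the same route as the paper's: reduce to a ball of radius $1$, use constancy on the ball to compute $L\varphi_k(x)=0$ from the pointwise formula, and contradict $L\varphi_k(x)=\lambda_k\varphi_k(x)>0$ (with the negative case handled by symmetry). The paper's version is just a terser statement of this same local computation.
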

\begin{proof}
Suppose there existed a ball $B_1(x)\subseteq \{i\in V: \varphi_k(i)>0\}$ on which $\varphi_k$ was constant.  Then just as in the previous proof we could calculate 
$$L\varphi_k(x)=\Sum_{y\sim x} \omega_{x,y} (\varphi_k(x)-\varphi_k(y))=0,$$
which contradicts $L\varphi_k(x)=\lambda_k\varphi_k(x)>0$.
\end{proof}

We wish to extend Proposition \ref{prop:flatballs} to the set $V_0$.  However, as seen in generalized ladder graphs, Ladder$(n,m)$ for $n$ odd and $m>2$, for which $V_0$ contains a ball of radius 1.  This ball, however, contains 3 vertices (the center vertex and its two neighbors).  The next goal is to characterize graphs whose characteristic set $V_0$ contains a ball of radius 1 containing at least four vertices.  We prove that this is impossible for planar graphs.  

\begin{definition}
A \emph{planar graph} is a graph whose vertices and edges can be embedded in $\R^2$ with edges intersecting only at vertices.\end{definition} 

In 1930, Kazimierz Kuratowski characterized all planar graphs in terms of subdivisions.
\begin{definition}
A \emph{subdivision} of a graph $G(V,E)$, also referred to as an \emph{expansion}, is the graph $H(\tilde V, \tilde E)$ where the vertex set is the original vertex set with an added vertex, $w$, and the edge set replaces an edge $(u,v)$ with the two edges $(u,w)$ and $(w,v)$.  That is,
$\tilde V=V\cup \{w\}$ and $\tilde E= E\setminus\{(u,v)\}\cup\{(u,w),(w,v)\}$.
\end{definition}
\begin{theorem}[Kuratowski's Theorem, \cite{Kuratowski1930}]
A finite graph, $G$, is planar if and only if it does not contain a subgraph that is a subdivision of $K_5$ or $K_{3,3}$, where $K_5$ is the complete graph on 5 vertices and $K_{3,3}$ is the complete bipartite graph on six vertices (also known as the utility graph), see Figure \ref{fig:K5K33}.
\end{theorem}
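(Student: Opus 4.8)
The plan is to prove the two implications separately, noting that the forward direction is elementary while the converse is the substantive half. For the ``only if'' direction I would first establish that $K_5$ and $K_{3,3}$ are themselves non-planar: applying Euler's formula $|V|-|E|+|F|=2$ to a hypothetical plane embedding, and using that every face of a simple plane graph on at least three vertices is bounded by at least three edges (at least four, if the graph is bipartite), gives the bounds $|E|\le 3|V|-6$ in general and $|E|\le 2|V|-4$ in the bipartite case; since $K_5$ has $10>3\cdot 5-6=9$ edges and $K_{3,3}$ has $9>2\cdot 6-4=8$ edges, neither can be planar. Then I would observe that planarity is inherited by subgraphs, and that subdividing an edge (or suppressing a degree-two vertex) does not change whether a graph is planar; hence any graph containing a subdivision of $K_5$ or $K_{3,3}$ as a subgraph is non-planar.

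For the converse --- every finite graph with no $K_5$- or $K_{3,3}$-subdivision is planar --- I would argue by contradiction, taking a counterexample $G$ with the fewest edges. First I would reduce to the case that $G$ is $3$-connected: vertices of degree at most two can be suppressed without affecting the hypotheses, and if $\{u,v\}$ were a $2$-separator splitting $G$ into overlapping pieces $G_1,G_2$ with $G_1\cap G_2=\{u,v\}$, then by minimality each $G_i+uv$ would be planar with the edge $uv$ on its outer face (one checks $G_i+uv$ still contains no Kuratowski subdivision), and these embeddings could be glued along $uv$ to embed $G$. With $G$ now $3$-connected and on at least five vertices, I would invoke the standard lemma (due to Tutte, and to Thomassen) that such a graph has an edge $e=xy$ whose contraction $G/e$ is again $3$-connected. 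A short case analysis at the branch vertex created by the contraction shows $G/e$ still has no subdivision of $K_5$ or $K_{3,3}$, so by minimality $G/e$ is planar, and being $3$-connected it has an essentially unique plane embedding by Whitney's theorem.

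The final and hardest step is to reinflate $e$: in the unique embedding of $G/e$, examine the cyclic arrangement, around the contracted vertex $v_e$, of the edges leading to the neighbours of $x$ and of $y$. If the neighbours of $x$ (other than $y$) all lie on the boundary of a single face ``on one side'' and likewise for $y$, then $v_e$ can be split back into the edge $xy$ and $G$ re-embedded in the plane, contradicting non-planarity. Otherwise the interleaving of the two neighbour-sets around $v_e$ is forced into one of a small list of configurations, each of which one exhibits directly as containing a subdivision of $K_5$ (for instance $x,y$ together with three common or mutually ``crossing'' neighbours) or of $K_{3,3}$ --- again a contradiction, completing the proof. I expect this case analysis of the neighbour arrangement around $v_e$, together with proving the contractible-edge lemma and carrying out the $3$-connectivity reduction, to be the main technical obstacles; the remainder is bookkeeping with Euler's formula. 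I note that in the present paper this theorem is only quoted, via \cite{Kuratowski1930}, and used as a black box, so none of this is carried out here.
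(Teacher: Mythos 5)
The paper does not prove this statement: Kuratowski's theorem is quoted from \cite{Kuratowski1930} and used purely as a black box (to set up Wagner's theorem and the argument of Theorem \ref{th:planarballs}), so there is no in-paper proof to compare yours against. Your outline is, on its own terms, the standard modern route (essentially Thomassen's proof): the easy direction via Euler's formula with the edge bounds $|E|\le 3|V|-6$ and $|E|\le 2|V|-4$, plus the observations that planarity passes to subgraphs and is invariant under subdivision; and the hard direction by taking an edge-minimal counterexample, reducing to the $3$-connected case by suppressing low-degree vertices and gluing along $2$-separators, contracting a Tutte--Thomassen contractible edge, invoking Whitney's uniqueness of the embedding, and reinflating with a case analysis on the cyclic order of the neighbours of $x$ and $y$ around the contracted vertex. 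The skeleton is correct and the numerology in the easy direction checks out. Be aware, though, that as written this is a plan rather than a proof: the contractible-edge lemma, Whitney's theorem, and especially the final interleaving case analysis are each nontrivial and are only named, not carried out, and the $2$-separator gluing step needs the (standard but not free) facts that $G_i+uv$ is strictly smaller, still Kuratowski-free because the missing edge is realized by a $u$--$v$ path through the other piece, and re-embeddable with $uv$ on the outer face. None of this is a defect relative to the paper, which proves nothing here, but those are the places where the real work would live if you were to write it out.
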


A weaker formulation of Kuratowski's Theorem can be stated in terms of graph minors.

\begin{definition}
Given an undirected graph $G(V,E)$, consider edge $e=(u,v)\in E$.  \emph{Contracting} the edge $e$ entails deleting edge $e$ and identifying $u$ and $v$ as the same vertex.  The resulting graph $H(\tilde V,\tilde E)$ has one fewer edge and vertex as $G$.  

An undirected graph is called a \emph{minor} of $G$ if it can be formed by contracting edges of $G$.
\end{definition}

\begin{theorem}[Wagner's Theorem, \cite{Wagner1937}]
A finite graph is planar if and only if it does not have $K_5$ or $K_{3,3}$ as a minor.
\end{theorem}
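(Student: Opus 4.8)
The plan is to deduce Wagner's Theorem from Kuratowski's Theorem, which was just recorded above, using the elementary relationship between subdivisions and minors. Write $\mathcal{K}$ for the pair of forbidden graphs $\{K_5,K_{3,3}\}$.

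For the forward implication I would argue that the class of planar graphs is closed under taking minors. Deleting a vertex or an edge from a plane-embedded graph manifestly leaves a plane embedding, and contracting an edge $e=(u,v)$ can be carried out inside a fixed embedding by collapsing the Jordan arc representing $e$ to the point $u$ (dragging $v$ and its incident arcs along $e$), which introduces no new crossings. Hence every minor of a planar graph is planar. Combined with the fact that $K_5$ and $K_{3,3}$ are themselves non-planar --- a one-line consequence of Euler's formula, since a simple planar graph on $n$ vertices has at most $3n-6$ edges, and at most $2n-4$ if it is bipartite, while $K_5$ has $10>9$ edges and $K_{3,3}$ has $9>8$ --- it follows that a graph containing $K_5$ or $K_{3,3}$ as a minor cannot be planar. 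This gives the direction: planar $\Rightarrow$ no $\mathcal{K}$-minor.

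For the converse I would argue the contrapositive. If $G$ is non-planar then, by Kuratowski's Theorem, $G$ has a subgraph $H$ that is a subdivision of $K_5$ or of $K_{3,3}$. But any subdivision of a graph $H_0$ has $H_0$ as a minor: each edge of $H_0$ is replaced in the subdivision by an internally disjoint path, and contracting all the edges of that path --- equivalently, suppressing each subdivision vertex, every such vertex having degree two --- returns the original edge, so performing all these contractions recovers $H_0$ exactly. Since the property of having $H_0$ as a minor is inherited by any graph that contains $H$ as a subgraph, $G$ itself has $K_5$ or $K_{3,3}$ as a minor. Thus no $\mathcal{K}$-minor $\Rightarrow$ planar, and the two implications together prove the theorem.

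I expect no genuinely hard step once Kuratowski's Theorem is granted; the only points demanding a little care are (i) checking that edge contraction preserves planarity of an \emph{embedding}, not merely of the abstract graph, and (ii) using only the easy direction of the subdivision/minor correspondence --- a subdivision of $H_0$ always has $H_0$ as a minor, whereas the reverse implication fails in general (for instance $K_5$ can occur as a minor of a graph without occurring as a topological minor), but that reverse direction is never needed here.
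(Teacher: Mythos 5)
The paper does not prove Wagner's Theorem; it simply cites \cite{Wagner1937} and presents the statement as a reformulation of Kuratowski's Theorem in the language of minors. Your derivation of Wagner from Kuratowski is correct and is the standard one: planarity is closed under vertex/edge deletion and edge contraction, $K_5$ and $K_{3,3}$ are non-planar by the Euler-formula edge counts $10>3\cdot 5-6$ and $9>2\cdot 6-4$, and in the other direction a non-planar graph contains a subdivision of one of the two forbidden graphs (Kuratowski), which yields that graph as a minor by contracting the subdivided paths. You are also right to note that only the easy half of the subdivision/minor correspondence is needed. One small point of friction with the paper's own framework: the paper defines a minor of $G$ as a graph obtained by contracting edges of $G$ \emph{only}, with no mention of deletions, whereas your step ``having $H_0$ as a minor is inherited by any graph containing $H$ as a subgraph'' implicitly uses deletion of vertices and edges. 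This is an imprecision in the paper's definition rather than a gap in your argument, but if one insisted on the paper's literal definition you would need to add the routine remark that passing to a subgraph is also part of the minor operation (or restate the definition in the standard delete-and-contract form) before your argument closes.
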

Because of the importance of $K_5$ and $K_{3,3}$ in identifying non-planar graphs, there are referred to as \emph{forbidden minors}.

\begin{figure}[htbp]
\begin{center}
\includegraphics[scale=.4]{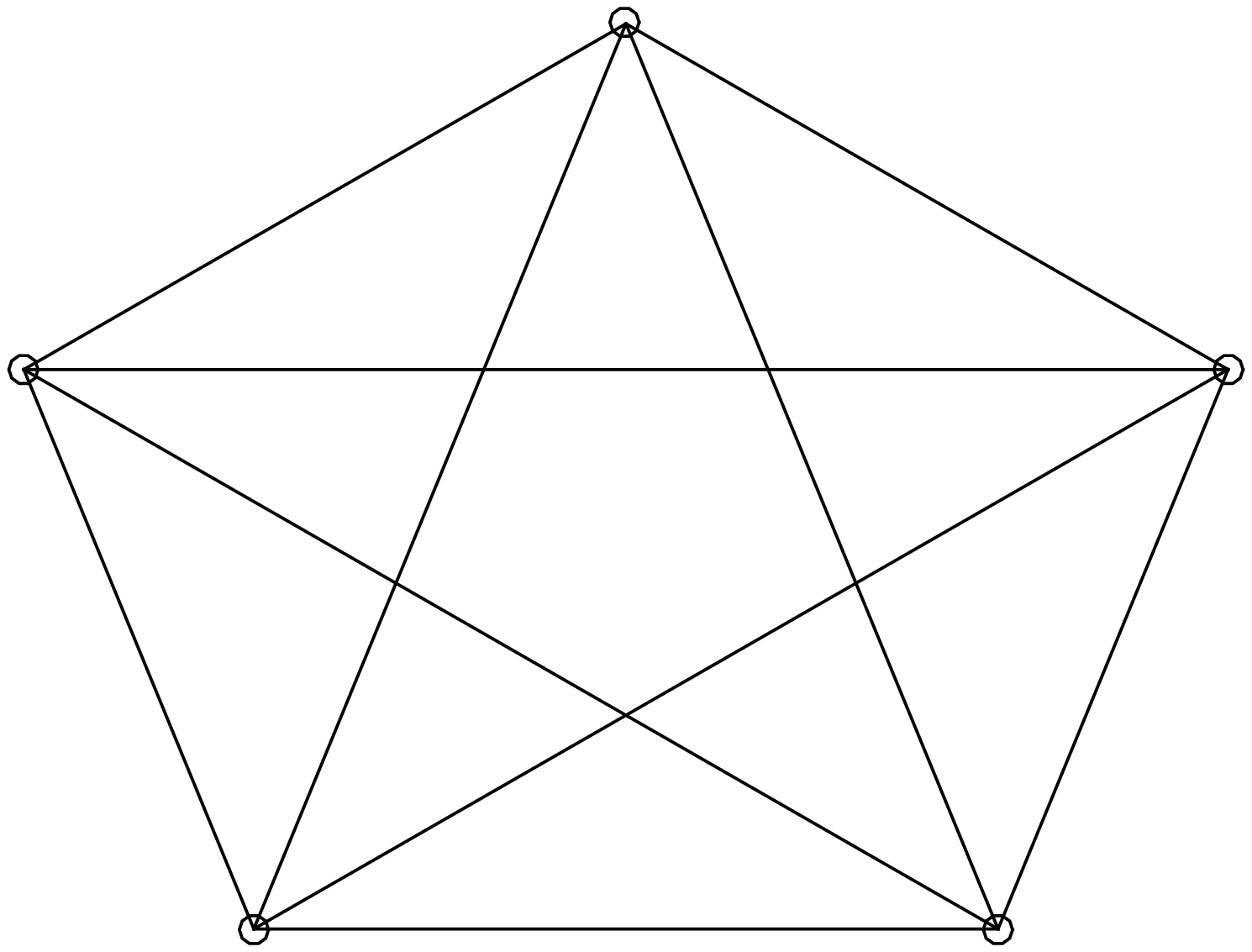}\includegraphics[scale=.4]{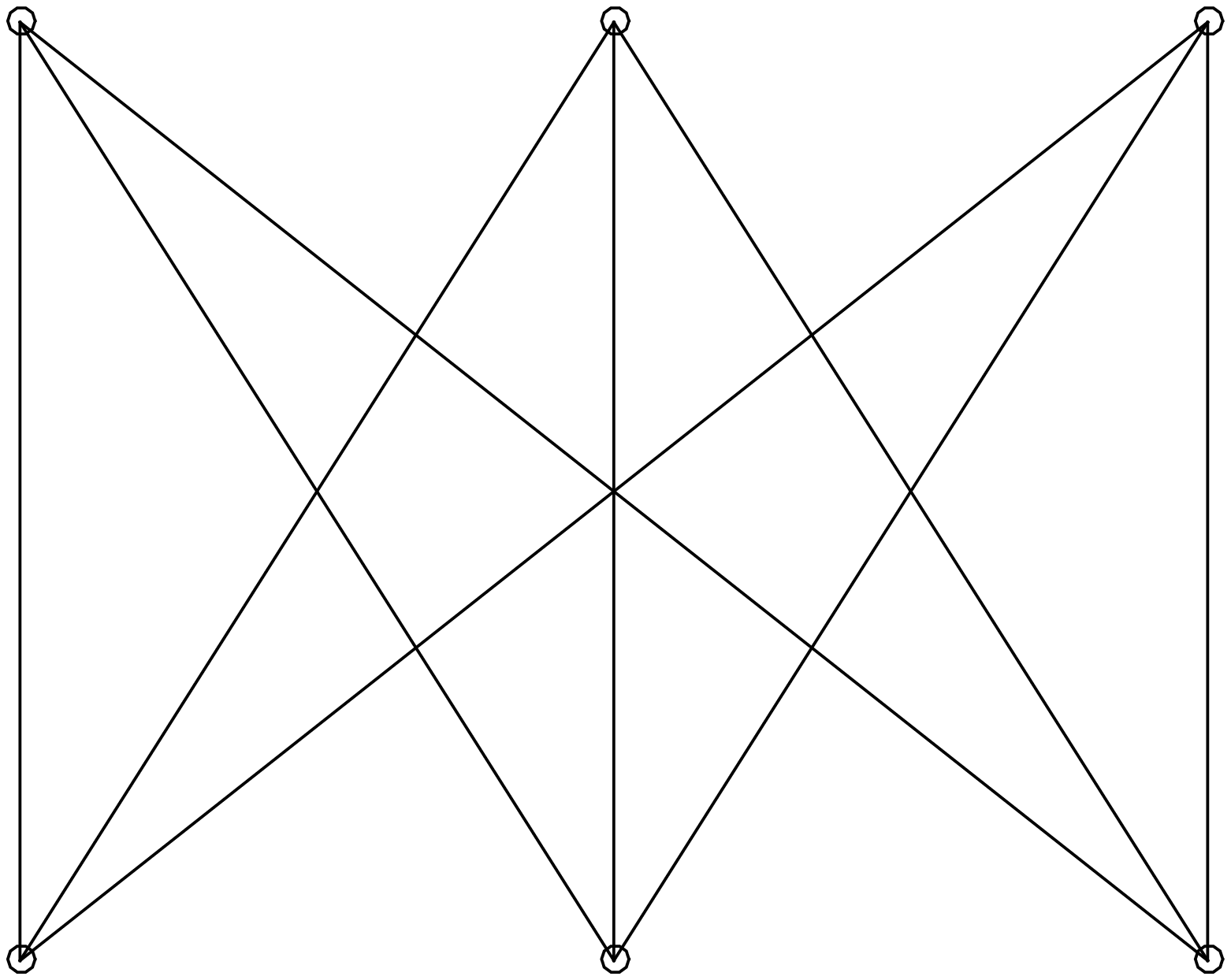}
\end{center}
\caption{The forbidden minors.  Left: The complete graph on five vertices.  Right: The complete bipartite graph on six vertices.}\label{fig:K5K33}
\end{figure}

One of the main results in this section shows that planar graphs cannot have large balls contained in the characteristic set $V_0$.
\begin{theorem}\label{th:planarballs}
Let $G(V,E)$ be a planar graph with Fiedler vector $\varphi_1$.  Then the zero set of $\varphi_1$ contains no balls of radius $r=1$ with more than three vertices. 
\end{theorem}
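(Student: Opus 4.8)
The plan is to argue by contradiction and to produce one of the forbidden minors. Suppose some ball $B_1(x)\subseteq V_0$ has four or more vertices; then $x$ has three neighbors $y_1,y_2,y_3$, and $\varphi_1$ vanishes at each of $x,y_1,y_2,y_3$. Since $\varphi_1\neq 0$ while $\langle\varphi_1,\varphi_0\rangle=0$ forces $\sum_{v}\varphi_1(v)=0$, neither $V_+$ nor $V_-$ is empty. I would use three structural inputs already available in this section: (i) $V_+$ and $V_-$ are each connected (\cite{Urschel2014}); (ii) $V_+\cup V_0$ and $V_-\cup V_0$ are each connected (\cite{Fiedler1975}); and (iii) exactly as in the proof of Lemma~\ref{lem:Zset}, every $z\in V_0$ satisfies $\sum_{w\sim z}\varphi_1(w)=\lambda_1\varphi_1(z)=0$, so that a vertex of $V_0$ adjacent to $V_+$ must also be adjacent to $V_-$, and vice versa.

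Next I would contract. Shrink the connected set $V_+$ to a single vertex $p$ and the connected set $V_-$ to a single vertex $m$, obtaining a minor $\widehat G$ of $G$ on vertex set $V_0\cup\{p,m\}$ in which the edges $xy_1,xy_2,xy_3$ persist and $\{p\}\cup V_0$ and $\{m\}\cup V_0$ are each connected. From this connectivity I would route, for each $i$, a path $Q_i^{+}$ from $y_i$ to $p$ and a path $Q_i^{-}$ from $y_i$ to $m$, each with interior in $V_0$, chosen so that $Q_1^{\pm},Q_2^{\pm},Q_3^{\pm}$ are pairwise internally disjoint and none has an interior vertex in $\{x,y_1,y_2,y_3\}$. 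Together with the three edges $xy_i$, these nine internally disjoint connections form a subdivision of $K_{3,3}$ with parts $\{x,p,m\}$ and $\{y_1,y_2,y_3\}$; equivalently $K_{3,3}$ is a minor of $G$, contradicting Wagner's Theorem (equivalently, Kuratowski's), since $G$ is planar. Hence no such ball exists.

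The crux --- and what I expect to be the main obstacle --- is the simultaneous internally-disjoint routing of the six paths: connectivity of $\{p\}\cup V_0$ yields one $y_i$-to-$p$ path at a time, but not three that are internally disjoint, and the naive construction can genuinely break down when $V_0$ is small, for instance when some $y_i$ is joined to the rest of $V_+\cup V_0$ only through $x$. To handle this I would first strip away all ``zero pockets'' at $x$: if $x$ is a cut vertex of $G$ whose removal leaves a component $C$ contained entirely in $V_0$ (by input (iii) such a component can be adjacent to neither $V_+$ nor $V_-$, so it attaches to the rest of $G$ only at $x$), then, since $\varphi_1\equiv 0$ on $C$ and $C$ is joined to $G$ only at $x$, deleting $C$ changes neither $\lambda_1$ nor the restriction of $\varphi_1$, which is therefore again a Fiedler vector of the connected planar graph $G\setminus C$, still vanishing on $B_1(x)\setminus C$. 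After this reduction one may assume the relevant pieces of $V_+\cup V_0$ and of $V_-\cup V_0$ near $x$ are $2$-connected, where a Menger/fan-lemma argument delivers the three disjoint paths to $p$ and the three to $m$; the residual small configurations --- notably when $\deg_G(x)$ has dropped below $3$ after stripping, or when $G$ is a star, in which case $B_1(x)\subseteq V_0$ would force $\varphi_1\equiv 0$ --- are checked by hand. Carrying out this reduction carefully, and verifying at each step that the restricted vector remains a Fiedler vector, is the technical heart of the argument.
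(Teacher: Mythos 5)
Your overall strategy---argue by contradiction, contract $V_+$ and $V_-$ to single vertices $p$ and $m$, exhibit $K_{3,3}$ as a minor on the parts $\{x,p,m\}$ and $\{y_1,y_2,y_3\}$, and invoke Wagner's Theorem---is exactly the paper's. But you make the decisive step much harder than it needs to be, and then leave it unfinished. The observation you are missing is that your own input (iii) should be applied \emph{at each $y_i$}, not merely quoted: since $y_i\in V_0$, the eigenvalue equation gives $\sum_{w\sim y_i}\varphi_1(w)=\lambda_1\varphi_1(y_i)=0$, so as soon as $y_i$ has any neighbor outside $V_0$ it has a neighbor $p_i\in V_+$ \emph{and} a neighbor $n_i\in V_-$. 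With these direct adjacencies in hand, the sets $\{x\},\{y_1\},\{y_2\},\{y_3\},V_+,V_-$ are pairwise disjoint connected branch sets, the edges $xy_i$, $y_ip_i$, $y_in_i$ run between the appropriate pairs, and contracting $V_+$ and $V_-$ immediately yields $K_{3,3}$ as a minor. No system of internally disjoint paths is needed; the six-path routing problem you identify as the ``crux,'' together with the Menger/fan and $2$-connectivity machinery you propose to resolve it, is a self-inflicted difficulty caused by using only the connectivity of $V_+\cup V_0$ instead of the pointwise consequence of $L\varphi_1(y_i)=0$. As written, that routing step is a genuine gap: connectivity of $\{p\}\cup V_0$ does not produce three internally disjoint $y_i$--$p$ paths, the $2$-connectivity needed for a Menger argument is never established, and the ``checked by hand'' residual cases are not enumerated.

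The one legitimate difficulty you put your finger on is a $y_i$ all of whose neighbors lie in $V_0$ (your ``zero pocket''), where the adjacency argument gives nothing. The paper reduces this away at the outset by contracting, inside $V_0$, the connected component of $V_0$ containing $x$ down to $B_1(x)$; this is a purely combinatorial minor operation, so planarity is preserved and no spectral claim about the smaller graph is required. Your alternative---deleting the pocket and asserting that the restriction of $\varphi_1$ ``changes neither $\lambda_1$ nor the restriction'' and remains a Fiedler vector---is not justified as stated: deleting vertices can lower the algebraic connectivity, in which case the restricted vector is still an eigenvector but no longer the Fiedler vector, and you would then lose the connectivity of $V_+$ and $V_-$ from \cite{Urschel2014} on which the contraction step depends. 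If you restructure the argument around the direct-adjacency observation and treat the pocket case by contraction rather than deletion, you recover the paper's short proof.
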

\begin{proof}
Suppose that $V_0$ contains a ball, $B_1(x)$, centered at vertex $x\in V_0$ and comprised of at least four vertices.  Without loss of generality, we can assume that the connected component of $V_0$ containing $x$ equals $B_1(x)$.  If not, then contract edges so that the connected component of $V_0$ containing $x$ equals a ball of radius 1.  Since $|B_1(x)|\geq 4$, then we have $d_x\geq 3$ and let $\{y_i\}_{i=1}^{d_x}$ denote the neighbors of $x$.  Then as constructed, $B_1(x)=\{x,y_1,y_2,...,y_{d_x}\}$.

By Lemma \ref{lem:Zset}, for $i=1,2,3$, each vertex $y_i$ has at least one neighbor in $V_+$ and at least one in $V_-$; pick one neighbor from each set and denote them $p_i$ and $n_i$, respectively.  It is proved in \cite{Urschel2014} that $V_+$ and $V_-$ are connected subgraphs of $G$.  Therefore, there is a path of edges that connect $p_1$, $p_2$, and $p_3$ (if $p_1=p_2=p_3$, then this path is empty).  We create a minor of $G$ by contracting the path connecting $p_1$, $p_2$, and $p_3$ to create one vertex $p\in V_+$.  Similarly, since $V_-$ is connected, we can contract the path connecting $n_1$, $n_2$, and $n_3$, to create one vertex $n\in V_-$.

Consider the subgraph of the now minorized version of $G$ consisting of vertices $\{x,p,n,y_1,y_2,y_3\}$.  This subgraph is $K_{3,3}$, the complete bipartite graph on six vertices since the vertices $\{x,p,n\}$ are all connected to $\{y_1,y_2,y_3\}$.  Thus by Wagner's Theorem, $G$ is not a planar graph, which is a contradiction.  This completes the proof.
\end{proof}

The result of Theorem \ref{th:planarballs} does not hold for general graphs.  We construct a family of (nonplanar) graphs for which $V_0$ contains a ball with a large number of vertices.  Since the set of vertices for which the Fiedler vector vanishes is large, we call this family of graphs the \emph{barren graphs}.  The barren graph with $|V|=N+7$ and $|V_0|=N+1$ is denoted Barr$(N)$.

\subsection{Construction of the barren graph, Barr$(N)$}\label{sec:barren}
The barren graph will be constructed as a sum of smaller graphs.
\begin{definition}\label{def:graphadd}
Let $G_1(V,E_1)$ and $G_2(V,E_2)$ be two graphs.  The \emph{sum} of graphs $G_1$ and $G_2$ is the graph $G(V,E)$ where $E=E_1\cup E_2$.
\end{definition}

The barren graph Barr$(N)$ is defined as follows
\begin{definition}
Let $K(V_i,V_j)$ denote the bipartite complete graph between vertex sets $V_i$ and $V_j$, that is, the graph with vertex set $V=V_i\cup V_j$ and edge set $E=\{(x,y): x\in V_i, y\in V_j\}$.  For $N\geq 3$ the barren graph, Barr$(N)$, is a graph with $N+7$ vertices.  Let $\{V_i\}_{i=1}^6$ denote distinct vertex sets with given cardinalities $\{|V_i|\}_{i=1}^6=\{N,1,2,2,1,1\}$.  Then the barren graph is the following graph sum of the 5 complete bipartite graphs 
$$\text{Barr}(N)=K(V_1,V_2)+K(V_1,V_3)+K(V_1,V_4)+K(V_3,V_5)+K(V_4,V_6).$$
\end{definition}

As constructed, Barr$(N)$ itself is bipartite; all edges connect the sets $V_2\cup V_3 \cup V_4$ to $V_1\cup V_5 \cup V_6$.  Figure \ref{fig:barren} shows two examples of barren graphs.

\begin{figure}[htbp]
\begin{center}
\includegraphics[scale=.5]{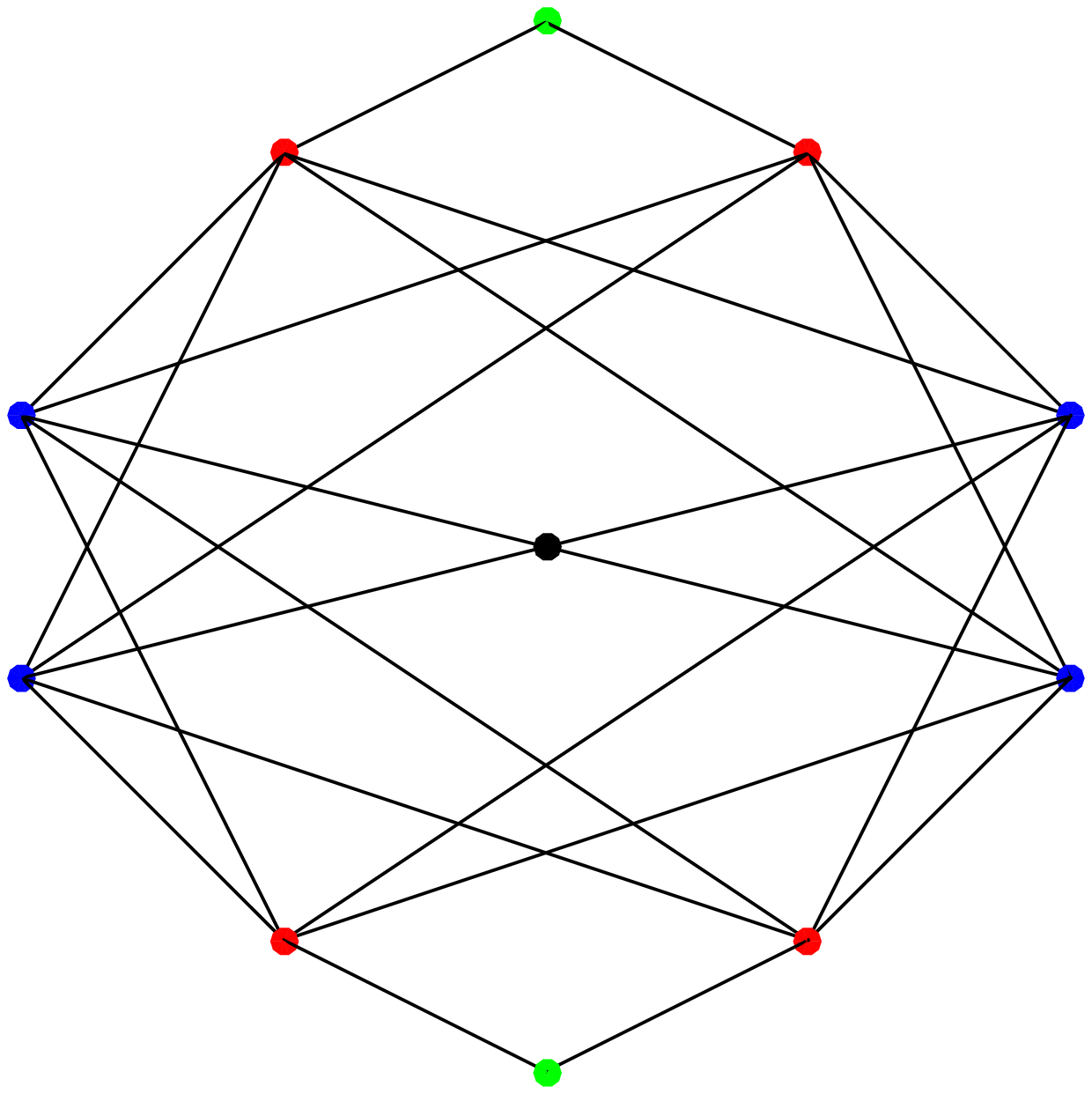}
\includegraphics[scale=.5]{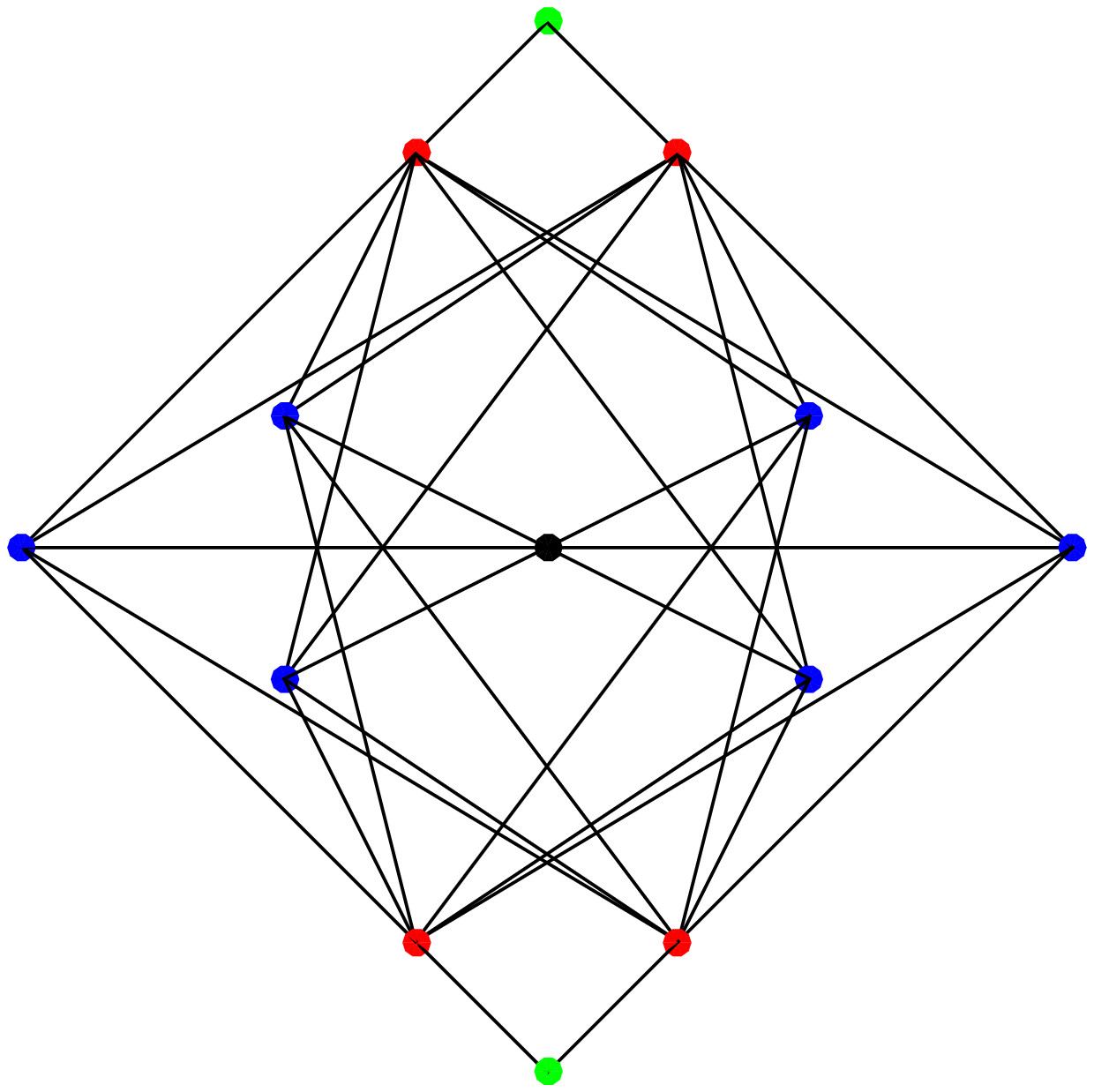}
\end{center}
\caption{The barren graph Barr$(4)$ (top) and Barr$(6)$ (bottom).  The set $V_1$ is denoted with $N$ blue dots; the vertex set $V_2$ is the black vertex in the center; the sets $V_3$ and $V_4$ are denoted with red dots; the sets $V_5$ and $V_6$ are denoted with green dots.}\label{fig:barren}
\end{figure}

We shall show that for any $N$, the Fiedler vector for Barr$(N)$ vanishes on $V_1 \cup V_2$ which has cardinality $N+1$.  Hence, the Fiedler vector for Barr$(N)$ has support on exactly six vertices for any $N\geq 3$.  In order to prove this, we explicitly derive the entire spectrum and all eigenvectors of the Laplacian.

\begin{theorem}\label{th:barren}
The barren graph, Barr$(N)$, has the spectrum given in Table \ref{tab:barreneigs}.  In particular, the Fiedler vector of Barr$(N)$ vanishes on vertices $V_1\cup V_2$ and hence $|\supp(\varphi_1)|=6$ for any $N$.
\end{theorem}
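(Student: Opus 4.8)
The plan is to exploit the heavy symmetry of $\mathrm{Barr}(N)$ to reduce the $(N+7)$-dimensional eigenvalue problem to a small one. First I would fix an ordering of the vertices by blocks $V_1,\dots,V_6$ (sizes $N,1,2,2,1,1$) and write the Laplacian $L$ in block form. The key observation is that the automorphism group of $\mathrm{Barr}(N)$ acts as the full symmetric group on $V_1$ (the $N$ vertices all play identical roles, each adjacent exactly to $V_2\cup V_3\cup V_4$), and also swaps the two vertices within $V_3$, within $V_4$, and swaps the pair $(V_3,V_5)$ with $(V_4,V_6)$. Any eigenvector can therefore be decomposed into a symmetric part (constant on each orbit) and parts lying in the ``sum-zero'' subspaces of the orbits $V_1$, $V_3$, $V_4$. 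I would treat these pieces separately.

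The first family of eigenvectors: functions supported on $V_1$ that sum to zero. For such a vector $f$, at any $x\in V_1$, since $x$'s neighbors lie in $V_2\cup V_3\cup V_4$ where $f=0$, we get $Lf(x) = d_x f(x) - 0 = |V_2\cup V_3\cup V_4|\cdot f(x) = 5 f(x)$; and at any vertex $y\notin V_1$, $Lf(y) = d_y f(y) - \sum_{x\sim y} f(x) = 0 - \sum_{x\in V_1} f(x) = 0$ because $f$ sums to zero on $V_1$ (every vertex outside $V_1$ that is adjacent to $V_1$ is adjacent to all of $V_1$). So every sum-zero vector on $V_1$ is an eigenvector with eigenvalue $5$ (here one uses $N\ge 3$ so $5<N+?$; I'd double-check the precise value of $\deg$ on $V_1$, which is $1+2+2=5$), contributing an eigenspace of dimension $N-1$. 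Similarly, sum-zero vectors on $V_3$ (extended by zero) are eigenvectors: at $z\in V_3$ the neighbors are $V_1\cup V_5$, all carrying value $0$, so $Lf(z)=\deg(z)f(z)$, and at every other vertex the incoming sum over $V_3$ vanishes; likewise for $V_4$. That gives two more eigenvectors. These account for $N+1$ dimensions of localized eigenvectors, all vanishing on $V_1\cup V_2$ except the $V_1$-family — so I must be careful: the $V_1$-family is supported on $V_1$, not vanishing there. The point is rather that the \emph{Fiedler} vector will turn out to be one of the \emph{symmetric} eigenvectors.

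For the remaining $6$ dimensions, restrict to vectors that are constant on each block $V_1,\dots,V_6$; call the common values $(a_1,\dots,a_6)$. Writing out $L$ on this invariant subspace produces a $6\times 6$ matrix $\tilde L$ (weighted by block sizes in the off-diagonal couplings), whose eigenvalues and eigenvectors I would compute explicitly — this is the routine linear-algebra core and I would just present the resulting characteristic polynomial and its roots, organized into Table~\ref{tab:barreneigs}. Among these $6$ eigenvalues, one is $0$ (the all-ones vector), and I claim the smallest positive one has an eigenvector with $a_1=a_2=0$: indeed the symmetry swapping $(V_3,V_5)\leftrightarrow(V_4,V_6)$ splits the $6$-dim space into a symmetric $4$-dim part (containing all-ones) and an antisymmetric $2$-dim part where $a_1=a_2=0$, $a_3=-a_4$, $a_5=-a_6$; on that $2$-dim antisymmetric block $\tilde L$ acts as an explicit $2\times 2$ matrix, and its smaller eigenvalue $\lambda^*$ is a concrete number independent of $N$. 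The main obstacle, then, is the comparison step: I must verify that this $\lambda^*$ is strictly smaller than $5$ (the localized eigenvalue), strictly smaller than the other positive eigenvalues coming from the $4$-dim symmetric block, and strictly smaller than the $V_3,V_4$-localized eigenvalues — so that $\lambda^*=\lambda_1$ and the corresponding antisymmetric eigenvector, which vanishes on $V_1\cup V_2$, is the Fiedler vector. Since all competing eigenvalues are explicit constants (or, for the symmetric block, roots of an explicit cubic whose smallest positive root I can bound below by $5$ or by direct estimate), this reduces to a finite check. I would conclude that $\varphi_1$ vanishes precisely on $V_1\cup V_2$, hence $|\mathrm{supp}(\varphi_1)| = (N+7)-(N+1) = 6$, and note that $\lambda_1$ is simple (the antisymmetric block is $2$-dimensional but only its smaller root is below everything else), so the Fiedler vector is unique up to sign and the characteristic set $V_0 = V_1\cup V_2$ contains the ball $B_1(v)$ for $v\in V_2$, of size $N+1$, establishing the promised counterexample to the planar bound.
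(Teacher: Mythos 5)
Your plan is essentially the paper's proof: the eigenvectors you obtain from the orbit decomposition (sum-zero vectors on $V_1$ with eigenvalue $5$, sum-zero vectors on $V_3$ and $V_4$ with eigenvalue $N+1$, the four block-symmetric vectors giving $0$ and the three roots of a cubic, and the two antisymmetric vectors with $a_1=a_2=0$) are exactly the ones the paper writes down by ansatz and then checks for orthogonality and count; your equitable-partition/automorphism framing just makes completeness automatic. Two details in your comparison step need correcting, though, because as literally stated they would derail the final inequality. First, the smaller eigenvalue of your $2\times 2$ antisymmetric block is $\lambda^{*}=\tfrac12\bigl(N+3-\sqrt{N^{2}-2N+9}\bigr)$, which is \emph{not} independent of $N$; what is true (and what you need) is the uniform bound $\lambda^{*}<2$ for all $N$, which follows from $(N-1)^{2}<N^{2}-2N+9$. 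Second, the smallest positive root $y_1$ of the cubic from the symmetric block cannot be bounded below by $5$: the paper shows $y_1=2$ when $N=3$ and $y_1>2$ for $N>3$, so the correct comparison is $\lambda^{*}<2\le y_1$, together with $\lambda^{*}<5$ and $\lambda^{*}<N+1$. With those fixes your argument closes and identifies the antisymmetric vector (nonzero on all of $V_3\cup V_4\cup V_5\cup V_6$, since $a=0$ forces $b=0$ in the $2\times2$ system) as the Fiedler vector, exactly as in the paper.
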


\begin{table}[htbp]
\begin{center}
\begin{tabular}{|c|c|c|}
\hline
$\lambda_k$ & value & eigenvector\\\hline\hline
$\lambda_0$ & 0 & constant function \\\hline
$\lambda_1$ & $\frac{1}{2} \left( N+3 - \sqrt{N^2-2N+9}\right)$ & Figure \ref{fig:barrenveca}\\\hline
$\lambda_2$ & $y_1$ & Figure \ref{fig:barrenvecb} \\\hline
$\lambda_3=\cdots= \lambda_{N+1}$& 5 & ON basis on $V_1$ \\\hline
$\lambda_{N+2}$& $y_2$ & Figure \ref{fig:barrenvecb}\\\hline
$\lambda_{N+3}=\lambda_{N+4}$& $N+1$ & Figure \ref{fig:barrenvecc} \\\hline
$\lambda_{N+5}$& $\frac{1}{2} \left( N+3 +\sqrt{N^2 -2N +9}\right)$ & Figure \ref{fig:barrenveca}\\\hline
$\lambda_{N+6}$& $y_3$ & Figure \ref{fig:barrenvecb} \\\hline
\end{tabular}
\caption{The spectrum of the barren graph, Barr$(N)$.  The values $y_1, y_2, y_3$ are the roots to the cubic polynomial \eqref{cubiceigs}.}\label{tab:barreneigs}
\end{center}
\end{table}

\begin{proof}

Firstly, the graph Barr$(N)$ is connected and so we have $\lambda_0=0$ with $\varphi_0\equiv (N+7)^{-1/2}$.  All other eigenvalues must be positive.

We will next show that the structure and support of the function shown in Figure \ref{fig:barrenveca} is an eigenvector for two eigenvalues of Barr$(N)$.

\begin{figure}[htbp]
\begin{center}
\begin{picture}(230,170)
\put(0,0){\includegraphics[scale=.4]{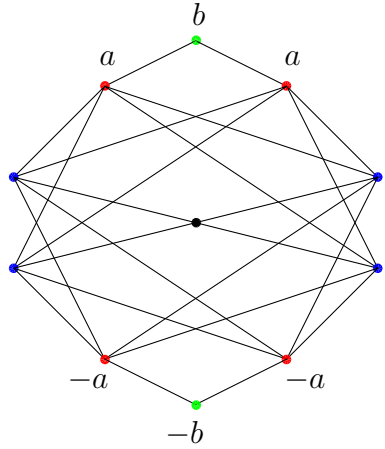}}
\put(115,162){$b$}
\put(80,147){$a$}
\put(150,147){$a$}
\put(68,25){$-a$}
\put(150,25){$-a$}
\put(105,4){$-b$}
\end{picture}
\end{center}
\caption{Support and function values for the eigenvectors associated with eigenvalues $\lambda_1$ and $\lambda_{N+5}$.}\label{fig:barrenveca}
\end{figure}
One can check upon inspection that the shown function $\varphi$ is orthogonal to the constant function.  Then, if the function shown in Figure \ref{fig:barrenveca}, call it $\varphi$, is an eigenvector, then the eigenvalue equation, $Lx=\lambda x$ is satisfied at each vertex.  It is easy to verify that $L\varphi(x) = 0$ for each $x\in V_1\cup V_2$.  For $x\in V_5$ or $x\in V_6$ the eigenvalue equation becomes $L \varphi(x) = 2(b-a) = \lambda b$.  For any $x\in V_3$ or $x\in V_4$, the eigenvalue equation gives $L\varphi(x)= Na+(a-b)=\lambda a$.  Finally, we also impose that the condition that the eigenvectors are normalized so that $\norm{\varphi}=1$.  Therefore, the function $\varphi$ shown in Figure \ref{fig:barrenveca} is an eigenvector of $L$ if and only if the following system of equations has a nontrivial solution:
\begin{equation*}
\left\{
\begin{array}{r c l}
4 a^2 +2 b^2 &=&1\\
2(b-a) &=& \lambda b\\
Na+(a-b)&=&\lambda a.
\end{array}
\right.
\end{equation*}

The first equation is not linear, but we can still solve this system by hand with substitution to obtain the following two solutions:
\begin{equation*}
\left\{
\begin{array}{r c l}
a&=& \frac{1}{2} \sqrt{\frac{N^2-2N+9 \mp (N-1)\sqrt{N^2-2N+9}}{2(N^2-2N+9)}}\\
b&=& \frac{1}{2} \sqrt{\frac{N^2-2N+9 \pm (N-1)\sqrt{N^2-2N+9}}{N^2-2N+9}}\\
\lambda &=&\frac{1}{2} \left( N+3 \pm \sqrt{N^2-2N+9}\right).
\end{array}
\right.
\end{equation*}
This gives two orthogonal eigenvectors and their eigenvalues.

Consider now the vector shown in Figure \ref{fig:barrenvecb} with full support, yet only taking on four distinct values.

\begin{figure}[htbp]
\begin{center}
\begin{picture}(230,170)
\put(0,0){\includegraphics[scale=.4]{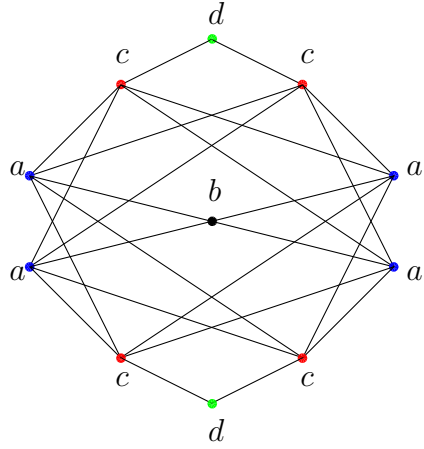}}
\put(115,162){$d$}
\put(80,147){$c$}
\put(150,147){$c$}
\put(80,25){$c$}
\put(150,25){$c$}
\put(115,4){$d$}

\put(115, 95){$b$}
\put(190, 105){$a$}
\put(190, 65){$a$}
\put(40, 105){$a$}
\put(40, 65){$a$}

\end{picture}
\end{center}
\caption{Support and function values for the eigenvectors associated with eigenvalues $\lambda_2$, $\lambda_{N+2}$, and $\lambda_{N+6}$.}\label{fig:barrenvecb}
\end{figure}
Similar to the previous example, we obtain a system of equations by imposing the conditions $\norm{\varphi}=1$, $\langle\varphi,1\rangle=0$, and from writing out the eigenvalue equations at each vertex class from $V_1, V_2, V_3$ and $V_5$ which gives:
\begin{equation*}
\left\{
\begin{array}{r c l r}
Na^2+b^2+4c^2 +2d^2 &=&1 & (\norm{\varphi}^2=1)\\
Na + b + 4c +2d &=& 0 & (\varphi \perp 1)\\
4(a-c)+(a-b)&=&\lambda a  &  (L\varphi(x)=\lambda\varphi(x): x\in V_1)\\
N(b-1)&=&\lambda b  &  (L\varphi(x)=\lambda\varphi(x): x\in V_2)\\
(c-d)+N(c-a)&=&\lambda c  &  (L\varphi(x)=\lambda\varphi(x): x\in V_3\cup V_4)\\
2(d-c)&=&\lambda d  &  (L\varphi(x)=\lambda\varphi(x): x\in V_5\cup V_6)
\end{array}
\right..
\end{equation*}
Again, this system cannot be solved with linear methods.  However, by tedious substitutions we can reduce the system (assuming each of the variables $a,b,c,d,\lambda$ are nonzero) to solving for the roots of the following cubic polynomial in $\lambda$:
\begin{equation}\label{cubiceigs}
\lambda^3 +(-2N-8)\lambda^2+(N^2+10N+15)\lambda + (-2N^2-14N) =0
\end{equation}
The cubic polynomial $x^3+c_2x^2+c_1 x+c_0=0$ has three distinct real roots if its discriminant, $\Delta=18 c_0c_1c_2 -4 c_2^3c_0 + c_2^2 c_1^2 -4 c_1^3 -27 c_0^2$, is positive.  The discriminant of \eqref{cubiceigs} is positive for all $N>0$ and hence we let $y_1<y_2<y_3$ denote the three positive roots which make up $\lambda_2, \lambda_{N+2}$, and $\lambda_{N+6}$, respectively.  By substituting back into the system of equations, one can obtain values for $a,b,c,d$ for each of the $\lambda=y_1,y_2,y_3$.  

The roots $y_1,y_2,y_3$ monotonically increase in $N$.  A simple calculation shows that $y_1=2$ for $N=3$ and $y_1>2$ for $N>3$.  Hence $\lambda_1 < \lambda_2 = y_1$ for all $N$.  Also observe that $y_2<5$ for $N<5$, so the ordering of the eigenvalues in Table \ref{tab:barreneigs} can vary but their values are accurate.

One can verify that the three eigenvectors obtained from Figure \ref{fig:barrenvecb} are linearly independent and orthogonal to each eigenvector derived so far.

\begin{figure}[htbp]
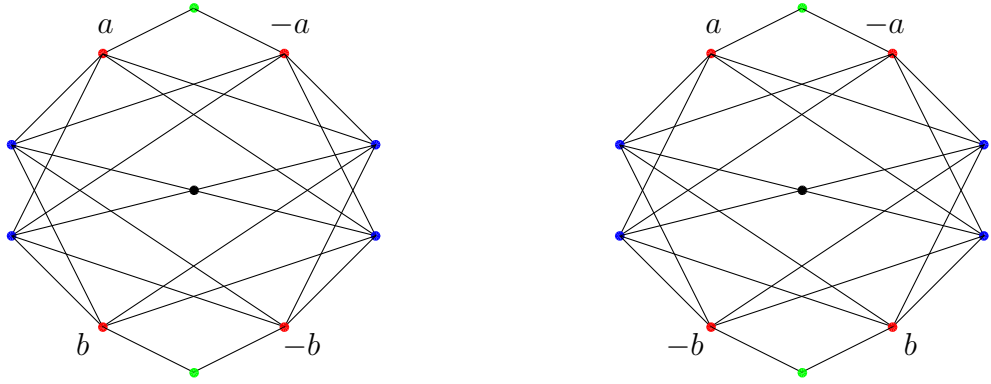

\begin{center}
\begin{picture}(460,170)
\put(0,0){\includegraphics[scale=.4]{counterexample4.eps}}
\put(80,147){$a$}
\put(145,147){$-a$}
\put(72,25){$b$}
\put(150,25){$-b$}

\put(230,0){\includegraphics[scale=.4]{counterexample4.eps}}
\put(310,147){$a$}
\put(370,147){$-a$}
\put(295,25){$-b$}
\put(385,25){$b$}

\end{picture}
\end{center}
\caption{Support and function values for the eigenvectors associated with eigenvalues $\lambda_{N+3}$ and $\lambda_{N+4}$.}\label{fig:barrenvecc}
\end{figure}
Consider now the two functions shown in Figure \ref{fig:barrenvecc}.  The eigenvalue equation gives $L\varphi(x)=0$ for every except for those $x\in V_3\cup V_4$ in which case we have $L\varphi(x)=(N+1)\varphi(x)$.  The two functions shown in Figure \ref{fig:barrenvecc} are orthogonal and linearly independent to each other and every eigenvector derived thus far and hence $N+1$ is an eigenvalue of Barr$(N)$ with multiplicity two.

Finally, we will construct eigenfunctions that are supported only on the $N$ vertices in $V_1$.  
Observe that if a function, $f$, is supported on $V_1$ then  for any $x\in V_1$, the eigenvalue equation gives $Lf(x)=5 f(x)$ since $x$ neighbors five vertices on which $f$ vanishes.  Therefore, Barr$(N)$ has eigenvalue 5.  To construct the corresponding eigenbasis, one can choose any orthonormal basis for the subspace of the $N$-dimensional vector space that is orthogonal to the constant vector.  Any basis for this $(N-1)$-dimensional vector space will give an eigenbasis for on $V_1$.  Finally one can verify by inspection that these $N-1$ eigenvectors are orthogonal and linearly independent to each eigenvector derived in this proof.

As such, we have now constructed an orthonormal, linearly independent eigenbasis for Barr$(N)$ corresponding to the eigenvalues given in Table \ref{tab:barreneigs}
\end{proof}

 As a remark, observe the behavior of the spectrum of Barr$(N)$ as $N\to\infty$.  For every natural number $N$, $\lambda_1<2$ and $\lim_{N\to\infty} \lambda_1=2$.  Using a symbolic solver, one can prove that $\lim_{N\to\infty}\lambda_2 = \lim_{N\to\infty} y_1=2$ as well.  Furthermore, the other two roots of the polynomial \eqref{cubiceigs} tend to infinity as $N\to\infty$.  Therefore, as $N\to \infty$, Barr$(N)$ has spectrum approaching $0$ (with multiplicity 1), $2$ (with multiplicity 2), $5$ (with multiplicity $N-2$), and the rest of the eigenvalues tending to $\infty$.

\subsection{Characteristic vertices and graph adding}\label{sec:graphadding}

In this subsection, we prove results about eigenvectors and their characteristic vertices for graph sums as defined in Definition \ref{def:graphadd}.  We borrow the following notation from \cite{Urschel2014} for clarity.
\begin{definition}
For any function $f$, Let 
$$i_0(f)=\{i\in V: f(i)=0\},$$
$$i_+(f)=\{i\in V: f(i)>0\},$$
$$i_-(f)=\{i\in V: f(i)<0\}.$$
\end{definition}
Observe that the set $V_0$ (resp. $V_+$ and $V_-$) from Section \ref{subsec3.1} is equal to $i_0(\varphi_1)$ (resp. $i_+(\varphi_1)$ and $i_-(\varphi_1)$).

\begin{theorem}\label{th:graphadd1}
Consider $n\geq 2$ connected graphs, $\{G_j(V_j,E_j)\}_{j=1}^n$ and suppose that all $n$ graph Laplacians, $L_j$, share a common eigenvalue $\lambda>0$ with corresponding eigenvectors $\varphi_{(j)}$.  Each graph's vertex set, $V_j$, assumes a decomposition $V_j=i_+(\varphi_{(j)})\cup i_-(\varphi_{(j)})\cup i_0(\varphi_{(j)})$ and suppose that $i_0(\varphi_{(j)})\neq\emptyset$ for all $j$.  Consider the graph $G(V,E)=G(\cup_{j=1}^n V_j, \cup_{j=1}^n E_j \cup E_0)$ where the edge set $E_0=\{(x_i,y_i)\}_{i=1}^K$ for $x_i \in i_0(\varphi_j)$, $y_i\in i_0(\varphi_\ell)$, and $j\neq \ell$ is nonempty.  Define $\varphi$ on $G$ by $\varphi(x)=\varphi_{(j)}(x)$ for $x\in V_j$.  Then, $\lambda$ is an eigenvalue of $G$ and $\varphi$ is a corresponding eigenvector.

Furthermore, if we add the assumption that the common eigenvalue $\lambda>0$ is the algebraic connectivity, i.e., the lowest nonzero eigenvalue of the graphs $G_j$, then $\lambda$ is an eigenvalue of $G(V,E)=G(\cup_{j=1}^n V_j, \cup_{j=1}^n E_j \cup E_0)$ but not the smallest positive eigenvalue.  Hence, $\varphi(x)$ is an eigenvector of $G$ but not its Fiedler vector.

\end{theorem}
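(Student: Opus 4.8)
The plan is to show that the eigenvector $\varphi$ constructed in the first part of the theorem, although it satisfies $L_G\varphi=\lambda\varphi$, cannot be a Fiedler vector of $G$. This suffices: $\lambda$ is a positive eigenvalue of $L_G$, so $\lambda_1(G)\le\lambda$ by the definition of the algebraic connectivity, and if moreover $\lambda\ne\lambda_1(G)$ then necessarily $\lambda_1(G)<\lambda$, which is exactly the assertion. The mechanism is the structural result recalled in Section~\ref{subsec3.1} (due to Urschel \cite{Urschel2014}): if $\psi$ is a Fiedler vector of a connected graph, then the subgraph induced on its positive support $i_+(\psi)$ is connected.

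First I would record the shape of $\varphi$. By construction $\varphi$ restricts to $\varphi_{(j)}$ on each block $V_j$, so $i_+(\varphi)=\bigcup_{j=1}^n i_+(\varphi_{(j)})$, a disjoint union over the blocks. Each $i_+(\varphi_{(j)})$ is nonempty: $G_j$ is connected and $\lambda>0$, so $\varphi_{(j)}$ is orthogonal to $\mathbf{1}_{V_j}$ and nonzero, hence takes both strictly positive and strictly negative values. Next I would show that the subgraph of $G$ induced on $i_+(\varphi)$ is disconnected. No edge of $\bigcup_j E_j$ joins two distinct blocks, so such edges cannot connect $i_+(\varphi_{(j)})$ to $i_+(\varphi_{(\ell)})$ for $j\ne\ell$. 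The remaining edges are those of $E_0$, and every $E_0$-edge has one endpoint in some $i_0(\varphi_j)$ and the other in some $i_0(\varphi_\ell)$; in particular neither endpoint lies in any positive support $i_+(\varphi_{(k)})$. Hence $G$ has no edge between $i_+(\varphi_{(j)})$ and $i_+(\varphi_{(\ell)})$ when $j\ne\ell$, so the induced subgraph on $i_+(\varphi)$ breaks into at least $n\ge 2$ nonempty pieces and is therefore disconnected.

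If $\lambda=\lambda_1(G)$, then $\varphi$ would be a Fiedler vector of $G$ — a genuine $\lambda_1$-eigenvector, which counts as a Fiedler vector even when $\lambda_1(G)$ has higher multiplicity, per the convention recalled in Section~\ref{subsec3.1} — and Urschel's theorem would force $i_+(\varphi)$ to induce a connected subgraph, contradicting the previous paragraph. Thus $\lambda\ne\lambda_1(G)$, hence $\lambda_1(G)<\lambda$, and $\varphi$ is an eigenvector but not a Fiedler vector of $G$. The one case requiring a brief extra word is when $G$ itself is disconnected: one passes to the connected component $H$ of $G$ containing the endpoints of $E_0$ (which exist since $E_0\ne\emptyset$); $H$ is again a graph sum of the subfamily of blocks meeting $E_0$ along the corresponding subfamily of $E_0$-edges, the first part of the theorem supplies the analogous $\lambda$-eigenvector of $L_H$ with disconnected positive support, and applying Urschel's theorem to the connected graph $H$ yields $\lambda_1(H)<\lambda$ and hence a positive eigenvalue of $G$ strictly below $\lambda$.

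I expect the only real difficulty to be resisting the temptation to argue analytically. The natural Rayleigh-quotient attempt — testing $L_G$ against functions that are constant on each block, which produces the algebraic connectivity of the weighted quotient graph on the $n$ blocks — does \emph{not} in general give a value below $\lambda$; that quotient-graph eigenvalue can exceed $\lambda$ (for instance when the blocks are long paths glued at their midpoints), and even refined test functions built from the Perron ground states of the block Laplacians perturbed at the $E_0$-endpoints require delicate estimates. The combinatorial route through the sign pattern of Fiedler vectors sidesteps all of this.
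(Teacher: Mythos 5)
Your treatment of the ``furthermore'' clause is correct and is essentially the paper's own argument: observe that no edge of $\bigcup_j E_j$ leaves a block and every $E_0$-edge has both endpoints in zero sets, so the subgraph induced on $i_+(\varphi)=\bigcup_j i_+(\varphi_{(j)})$ splits into $n\geq 2$ nonempty pieces, and then invoke the result of \cite{Urschel2014} that a Fiedler vector has connected positive support. (The paper notes that $i_-(\varphi)$ is disconnected as well, but one of the two suffices, as you use.) You are in fact more careful than the paper on two points: you explicitly handle the case where $\lambda_1(G)$ has multiplicity greater than one, where one must use the convention that every $\lambda_1$-eigenvector counts as a Fiedler vector so that ``$\varphi$ is not a Fiedler vector'' really does yield $\lambda\neq\lambda_1(G)$; and you treat the case where $E_0$ does not join all the blocks, so $G$ may be disconnected, by passing to a component containing an $E_0$-edge. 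The paper's own proof of this clause silently restricts to $n=2$ and does not address either point.

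The gap is that you have proved only half the theorem. The first assertion --- that $\varphi$ satisfies $L\varphi=\lambda\varphi$ on $G$ in the first place --- is itself a claim of the statement, and you take it as given (``although it satisfies $L_G\varphi=\lambda\varphi$''). The verification is short and uses exactly the observation you already isolated: at a vertex $x$ incident to no $E_0$-edge, $L\varphi(x)=L_j\varphi_{(j)}(x)=\lambda\varphi(x)$; at a vertex $x$ incident to some $E_0$-edge, the hypothesis forces $\varphi(x)=0$ and $\varphi(y)=0$ for every $E_0$-neighbor $y$, so the extra terms $\varphi(x)-\varphi(y)$ contributed by $E_0$ all vanish and again $L\varphi(x)=L_j\varphi_{(j)}(x)=\lambda\varphi(x)$. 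Add this case check and your proof is complete and matches the paper's.
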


\begin{proof}
We will verify that $L\varphi(x)=\lambda \varphi(x)$ for every $x\in V$.  Every $x\in V$ lies in exactly one $V_j$ and every edge connecting to $x$ must be in either $E_j$ or $E_0$.  Suppose $x$ contains no edges from $E_0$.  Then $L\varphi(x)=L_j\varphi_{(j)}(x)=\lambda \varphi_{(j)}(x)=\lambda \varphi(x)$.

Suppose instead that $x$ does contain at least one edge from $E_0$.  Then by construction of the set $E_0$, we have $\varphi(x)=0$ and $\varphi(y)=0$ for all $(x,y)\in E_0$.  This allows us to compute
\begin{eqnarray*}
 L\varphi(x)= \Sum_{y\sim x} \left(\varphi(x)-\varphi(y)\right) &=&    \Sum_{\substack{y\sim x\\(x,y)\in E_j}} \left(\varphi(x)-\varphi(y)\right)+\Sum_{\substack{y\sim x\\(x,y)\in E_0}}\left( \varphi(x)-\varphi(y)\right) \\
 &=& L_j \varphi_{(j)}(x) + 0 = \lambda \varphi_{(j)}(x) = \lambda \varphi(x).
 \end{eqnarray*}
Hence for any vertex in V, the vector $\varphi$ satisfies the eigenvalue equation and the first part of the proof is complete.

For the second claim of the theorem, let $G_1(V_1,E_2)$ and $G_2(V_2,E_2)$ have equal algebraic connectivities and Fiedler vectors $\varphi_{(1)}$ and $\varphi_{(2)}$, respectively.  We can decompose each vertex set into its positive, negative, and zero sets, i.e., $V_j=i_+(\varphi_{(j)})\cup i_-(\varphi_{(j)})\cup i_0(\varphi_{(j)})$.  Furthermore, $i_+(\varphi_{(j)})$ and $i_-(\varphi_{(j)})$ are connected subgraphs of $G_j$.

Now consider the larger graph $G(V,E)$.  The function $\varphi(x):= \varphi_{(j)}(x)$ for $x\in V_j$ is an eigenfunction of $G$ by the first part of the theorem.  However, now, the sets $i_+(\varphi)$ and $i_-(\varphi)$ are disconnected.  Indeed, let $x\in i_+(\varphi_{(1)})$ and $y\in i_+(\varphi_{(2)})$.  Then any path connecting $x$ and $y$ must contain an edge in $E_0$ since all $E_0$ contains all edges connecting $G_1$ to $G_2$.  And hence any path connect $x$ to $y$ will contain at least two vertices in $i_0(\varphi)$.  

Then by \cite{Urschel2014} since $i_+(\varphi)$ and $i_-(\varphi)$ are both disconnected, then $\varphi$ cannot be the Fiedler vector of $G$ and $\lambda$ is not the smallest nonzero eigenvalue.
\end{proof}

We can prove a stronger statement in the specific case where the graphs share algebraic connectivity, $\lambda_1$.

We can state a generalization of Theorem \ref{th:graphadd1} for eigenvectors supported on subgraphs of $G$.

\begin{theorem}\label{th:vertexzero}
Consider the graph $G(V,E)$.  Let $S\subseteq V$ and let $H(S,E_S)$ be the resulting subgraph defined by just the vertices of $S$.  Suppose that $\varphi^{(S)}$ is an eigenvector of $L_S$, the Laplacian of subgraph $H$, with corresponding eigenvalue $\lambda$.  If $E(S,V\setminus S) = E(i_0(\varphi^{(S)}), V\setminus S)$, that is, if all edges connecting graph $H$ to its complement have a vertex in the zero-set of $\varphi^{(S)}$, then $\lambda$ is an eigenvalue of $G$ with eigenvector 
$$\varphi(x)=\left\{ \begin{array}{l l}
\varphi^{(S)}(x)& x\in S\\
0 & x\notin S. \end{array} \right.$$
\end{theorem}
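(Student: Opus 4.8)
The plan is to verify directly that the extended function $\varphi$ satisfies the eigenvalue equation $L\varphi(x) = \lambda\varphi(x)$ at every vertex $x \in V$, splitting into the two natural cases $x \in S$ and $x \notin S$, exactly as in the proof of Theorem~\ref{th:graphadd1}. The key structural input is the hypothesis $E(S, V\setminus S) = E(i_0(\varphi^{(S)}), V\setminus S)$: every edge leaving $S$ is incident to a vertex on which $\varphi^{(S)}$ vanishes.

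First I would treat $x \notin S$. Here $\varphi(x) = 0$ by definition, so I need $L\varphi(x) = 0$. Writing $L\varphi(x) = \sum_{y \sim x}(\varphi(x) - \varphi(y)) = -\sum_{y\sim x}\varphi(y)$, I split the neighbors of $x$ into those in $V\setminus S$ (on which $\varphi \equiv 0$) and those in $S$. But any neighbor $y \in S$ of $x \notin S$ is the $S$-endpoint of an edge in $E(S, V\setminus S)$, hence by hypothesis $y \in i_0(\varphi^{(S)})$, so $\varphi(y) = \varphi^{(S)}(y) = 0$. Thus every term vanishes and $L\varphi(x) = 0 = \lambda\varphi(x)$.

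Next I would treat $x \in S$. Partition the neighbors of $x$ in $G$ into those lying in $S$ and those lying in $V\setminus S$; the former are exactly the $H$-neighbors of $x$ (since $H$ is the induced subgraph on $S$). This gives
\begin{equation*}
L\varphi(x) = \sum_{\substack{y \sim x \\ y \in S}}(\varphi(x) - \varphi(y)) + \sum_{\substack{y \sim x \\ y \in V\setminus S}}(\varphi(x) - \varphi(y)) = L_S\varphi^{(S)}(x) + \sum_{\substack{y \sim x \\ y \in V\setminus S}}\varphi(x),
\end{equation*}
using $\varphi(y) = 0$ for $y \notin S$. If $x$ has no neighbors outside $S$, the second sum is empty and $L\varphi(x) = L_S\varphi^{(S)}(x) = \lambda\varphi^{(S)}(x) = \lambda\varphi(x)$. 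If $x$ does have a neighbor outside $S$, then $x$ is the $S$-endpoint of an edge in $E(S, V\setminus S)$, so by hypothesis $x \in i_0(\varphi^{(S)})$ and $\varphi(x) = 0$; hence the second sum vanishes term by term, and again $L\varphi(x) = L_S\varphi^{(S)}(x) = \lambda\varphi^{(S)}(x) = 0 = \lambda\varphi(x)$. In all cases the eigenvalue equation holds, so $\lambda$ is an eigenvalue of $L$ with eigenvector $\varphi$ (note $\varphi \not\equiv 0$ since $\varphi^{(S)} \not\equiv 0$).

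I do not anticipate a genuine obstacle: this is essentially a bookkeeping argument, and the only subtlety is making sure both endpoints of a cut edge are handled — the $V\setminus S$ endpoint because $\varphi$ is zero there, and the $S$ endpoint because the hypothesis forces it into $i_0(\varphi^{(S)})$. The mildest care is needed in the case $x \in S$ with an external neighbor: one must invoke the hypothesis to conclude $\varphi(x)=0$ rather than assuming it. This theorem is strictly more general than Theorem~\ref{th:graphadd1} (take $S = V_j$ and $V \setminus S$ the union of the other components together with the $E_0$-edges), so an alternative, shorter writeup would be to deduce it as a special structural case, but the direct verification above is cleanest.
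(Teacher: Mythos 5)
Your proposal is correct and follows essentially the same route as the paper's proof: a direct vertex-by-vertex verification of $L\varphi(x)=\lambda\varphi(x)$, splitting the neighbor sum across the cut and using the hypothesis $E(S,V\setminus S)=E(i_0(\varphi^{(S)}),V\setminus S)$ to force both endpoints of every cut edge to carry the value zero. The only difference is organizational (the paper separates interior and boundary vertices into four cases, while you fold them into two), and your explicit remark that $\varphi\not\equiv 0$ is a small point the paper leaves implicit.
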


\begin{proof}
The proof is similar to the proof of Theorem \ref{th:graphadd1} in that we will simply verify that $L\varphi(x)=\lambda\varphi(x)$ at every point $x\in V$.  For any $x$ in the interior of $S$, then $L\varphi(x) = L_S \varphi^{(S)}(x) = \lambda \varphi^{(S)}(x)=\lambda \varphi(x)$.  For any $x$ in the interior of $V\setminus S$, then $L\varphi(x)=0$ since $\varphi$ vanishes at x and all of its neighbors.  For $x\in \delta(S)$ (recall $\delta(S)=\{x\in S : (x,y)\in E \text{ and } y\in V\setminus S\}$), we have 
\begin{eqnarray*}
 L\varphi(x)= \Sum_{y\sim x} \left(\varphi(x)-\varphi(y)\right) &=&    \Sum_{\substack{y\sim x\\y\in S}} \left(\varphi(x)-\varphi(y)\right)+\Sum_{\substack{y\sim x\\y\notin S}}\left( \varphi(x)-\varphi(y)\right) \\
 &=& L_S \varphi^{(S)}(x) + (0-0) = \lambda \varphi^{(S)}(x) = \lambda \varphi(x),
 \end{eqnarray*}
 where the term $(0-0)$ arises from the fact that $\varphi(y)=0$ since $y\notin S$ and since $(x,y)\in E$ then by assumption $x\in i_0(\varphi^{(S)})$ and hence $\varphi(x)=0$.  The same logic shows that $L\varphi(x)=0$ for $x\in \delta(V\setminus S)$.  Hence, we have shown that $L\varphi(x)=\lambda \varphi(x)$ for every possible vertex $x\in V$ and the proof is complete.
\end{proof}

Theorem \ref{th:vertexzero} is interesting because it allows us to obtain eigenvalues and eigenvectors of graphs by inspecting for certain subgraphs.  Furthermore since the eigenvector is supported on the subgraph, it is sparse and has a large nodal set.

\begin{example}
Consider the star graph $S_N(V,E)$ which is complete bipartite graph between $N$ vertices in one class $(V_A)$ and 1 vertex in the other $(V_B)$.  Let $S$ be the subgraph formed by any two vertices in $V_A$ and the one vertex in $V_B$.  Then the resulting subgraph on $S$ is the path graph on 3 vertices, $P_3$.  It is known that $P_3$ has Fiedler vector $\varphi^{(S)}=(\sqrt{2},0,-\sqrt{2})$ and eigenvalue $\lambda=1$.  Then by Theorem \ref{th:vertexzero}, the star graph $S_N$ has eigenvalue $\lambda=1$ with eigenvector supported on two vertices.  In fact, $S_N$ contains exactly ${N\choose 2}$ path subgraphs all of which contain the center vertex and have $\varphi^{(S)}$ as an eigenvector.  However, only $N-1$ of them will be linearly independent.  This method of recognizing subgraphs explains why $S_N$ has eigenvalue 1 with multiplicity $N-1$ and we have identified a set of basis vectors for that eigenspace.
\end{example}

\bibliographystyle{amsplain}
%%% \bibliographystyle{amsalpha}
%% No .bib with W7
%% Use .bib with Linux and Mac
\bibliography{MBKO.bib}
\end{document}